\documentclass[11pt,reqno]{amsart}
\usepackage[margin=3 cm]{geometry}
\usepackage[T1]{fontenc}    
\usepackage{hyperref}       
\usepackage[numbers]{natbib}
\usepackage{xcolor}         
\usepackage{bbm}            
\usepackage{amsmath, amssymb, amsfonts, amssymb, amsthm, mathtools, mathrsfs}
\usepackage[shortlabels]{enumitem}

\usepackage{soul}

\newcommand{\Z}{\mathbb{Z}}

\newcommand{\R}{\mathbb{R}}

\newcommand\restr[2]{\left.\kern-\nulldelimiterspace #1 \vphantom{|} \right|_{#2}}
\newcommand{\E}{{\mathbb E}}

\newcommand*\diff{\mathop{}\!\mathrm{d}}

\newcommand{\1}{\mathbbm{1}}
\newcommand{\alphabet}{A}
\newcommand{\subshift}{X}

\renewcommand{\epsilon}{\varepsilon}
\renewcommand{\P}{\mathbb{P}}

\numberwithin{equation}{section}

\theoremstyle{plain}

\newtheorem{thm}{Theorem}[section]
\newtheorem{lem}[thm]{Lemma}
\newtheorem{cor}[thm]{Corollary}
\newtheorem{prop}[thm]{Proposition}

\newtheorem*{thm*}{Theorem}
\newtheorem*{cor*}{Corollary}
\newtheorem*{prop*}{Proposition}
\newtheorem*{lem*}{Lemma}

\theoremstyle{remark}
\newtheorem{remark}[thm]{Remark} 
\newtheorem{defin}[thm]{Definition}

\newtheorem*{defin*}{Definition}

\newtheorem*{remark*}{Remark}
\newtheorem*{eg*}{Example}

\hypersetup{
colorlinks,
linkcolor={red!0!black},
citecolor={green},
urlcolor={blue!0!black}
}

\title{Central Limit Theorems for Finitary Factors of iid Processes}

\author{Raimundo Briceño}
\address{Facultad de Matem\'aticas, Pontificia Universidad Cat\'olica de Chile. Santiago, Chile}
\email{raimundo.briceno@uc.cl}
\thanks{Raimundo Briceño was partially supported by ANID/FONDECYT
Regular 1240508 and Avanza UC AV25085.}
\author{Zemer Kosloff}
\address{School of Mathematics, University of Bristol, Fry Building,
Woodland Road, Bristol BS8 1UG, United Kingdom and The Einstein Institute of Mathematics, The Hebrew University of Jerusalem,
Edmond J. Safra Campus, Givat Ram, Jerusalem 9190401, Israel}
\email{zemer.kosloff@bristol.ac.uk}
\thanks{Zemer Kosloff was partially supported by the ISF (grant No. 1180/22).}
\author{Nicolò Paviato}
\address{Weizmann Institute of Science, 234 Herzl Street, 7610001 Rehovot, Israel}
\email{nicolo.paviato@gmail.com}

\thanks{Nicolò Paviato was partially supported by the ISF (grant No. 264/22).}

\date{}

\begin{document}

\begin{abstract}
We study statistical limit theorems for time-series generated by multidimensional random fields that are finitary factors of iid random processes. Based on the decay of the tail of the coding radius, we establish the Central Limit Theorem with explicit rates of convergence, as well as the weak convergence to a Brownian sheet, for suitably regular functions. Our results can be applied to a variety of classical models in statistical mechanics, such as the Ising model, the hard-core model, and proper colourings.
\end{abstract}

\maketitle

\section{Introduction}

 The Central Limit Theorem (CLT) states that sums of iid sequences, when properly scaled, converge to a normal law. A main topic of study is to establish sufficient conditions on a stationary sequence, possibly dependent, under which the CLT holds. 

A key example of such results arises in smooth dynamics, where one considers a chaotic diffeomorphism $T$ of a Riemmanian manifold $M$ which preserves a probability measure $\mu$. In that setting, it is often the case that for every Hölder continuous function $v\colon M\to \R$, the stationary process $\{v \circ T^n\}_{n \ge 0}$ satisfies the CLT, meaning that there exists $\sigma^2>0$ such that for every $t\in \R$,
 \[
 \lim_{n\to\infty}\mu\left(\frac{\sum_{k=0}^{n-1}v\circ T^k-n\int v\,d\mu}{\sqrt{n\sigma^2}}\leq t\right)=\frac{1}{\sqrt{2\pi}}\int_{-\infty}^te^{-x^2/2}\,dx.
 \]
 
 For a non-conclusive list of results in this direction, as well as some works regarding the functional Central Limit Theorem, see~\cite{Che98,Den89,LimMatMel26,Liv96,Rat73,You98}.

 This work is concerned with obtaining statistical limit theorems for multi-parameter symbolic systems such as Markov random fields and Gibbs measures. The study of the CLT for such systems  has a long history and a wealth of results have been obtained; some relevant ones for this work include~\cite{Bol82,Ded98,IagSou79,New80,Woo83}.
 The techniques in these works are different than the smooth dynamics ones as, instead of martingale techniques, they mostly involve the Stein method and correlation inequalities, such as the FKG inequality. A special model of interest due to its relevance to statistics is functionals of iid random fields, where one can obtain CLT results by approximations with $m$-dependent sequences 
 \cite{Bol82,Ded98,ElMVolWu13,WanWood13}.

In the case of a single transformation, Denker and Keane~\cite{DenKea79} studied a strengthening of the factor map notion called finitary factors. These  are almost everywhere continuous maps between the spaces of the underlying dynamical systems that intertwine the actions. They showed that, if a dynamical system is a finitary factor of an iid system (Bernoulli shift), and the tail of the so called coding radius decays sufficiently fast, then the Central Limit Theorem holds for sufficiently regular functions. In this work, we carry out such a program for multi-parameter symbolic actions which are finitary factors of iid random fields with good coding radius. We show:
\begin{itemize}
\item A CLT result for regular observables, see Theorem~\ref{thm:CLT}.
\item A functional CLT version where the convergence is to a Brownian sheet, see Theorem~\ref{thm:FCLT}.
\item Based on the regularity of the function, rates in the CLT, see Theorem~\ref{thm:rates_CLT}. 
\end{itemize}
Our strategy goes by lifting an observable to the iid random field, and then applying results of~\cite{ElMVolWu13} and~\cite{Gir18}. 

The notion of finitary factors of iid turned out to be of particular relevance for multidimensional symbolic systems arising in statistical mechanics. It was shown by van der Berg and Steif~\cite{vandenberg1999}
that it can serve as a tool for detection of phase transition in lattice models without hard constraints. Spinka and his collaborators~\cite{harel2022,spi20,spi20_SPM} further showed that many lattice models are finitary factors of iid with a coding radius whose tail decays exponentially fast. Applying this to our main theorems, we derive in Subsection~\ref{subsec:stat_mec} the CLT, its functional version, and the speed of convergence for a large class of models from statistical mechanics. 

Our setup and results are presented in Section~\ref{sec:results} and Section~\ref{sec:proofs} is concerned with the proofs.

While working on this manuscript we learned about the preprint~\cite{ChaGalTak25},  where  concentration inequalities for local functions are obtained by finitary coding.

\section{Results}\label{sec:results}

For a finite alphabet $\alphabet$ and positive integer $d$, we consider the configuration space $\alphabet^{\Z^d}$ equipped with the prodiscrete topology. We let $\mathcal{T} = \{T^i\}_{i\in\Z^d}$ be the group of shift operators on $\alphabet^{\Z^d}$, defined as $(T^i x)_j=x_{i+j}$ for $i,j \in \Z^d$. We consider $\subshift\subset\alphabet^{\Z^d}$ closed and shift-invariant (a \textit{subshift}), which is compact. For $k\ge0$, we let $B_k=[-k,k]^d\cap \Z^d$ be the box of radius $k$. The topology on $\subshift$ is generated by the metric $\rho(x,x')=2^{-s(x,x')}$, where 
$$s(x,x')=\inf\{k\ge0 : \restr{x}{B_k} \neq \restr{x'}{B_k}\},$$
with the convention $\inf\emptyset=+\infty$ and $2^{-\infty}=0$.
Writing $\mathcal B$ for the Borel $\sigma$-algebra on $\subshift$, we
fix~$\mu$ a probability measure on $(\subshift,\mathcal{B})$ that is invariant under~$\mathcal{T}$.

The following standard notation will be used throughout the text. For two non-negative sequences $\{a_k\}_{k\ge 1}$ and $\{b_k\}_{k\ge 1}$, we write $a_k\lesssim b_k$, if there exist  $C>0$ and $k_0\ge1$ such that $a_k\le C b_k$ for all $k\ge k_0$. 
If $i\in\Z^d$, we write $\|i\|_\infty=\max\{|i_1|,\dots,|i_d|\}$.

\subsection{Setup}

\subsubsection{Regular Functions}

Given a subset $E\subseteq\Z^d$ and a configuration $x\in\subshift$, we let $\restr{x}{E}\in\alphabet^{E}$ be the restriction of $x$ to $E$.
For a bounded function $v\colon \subshift\to \R$ and $E\subseteq\Z^d$, define
\begin{equation*}\label{eq:delta_k}
\delta_E(v)=\sup\big\{|v(x)-v(x')|: \restr{x}{E}=\restr{x'}{E}\big\}.
\end{equation*}
For $k\geq 0$, we abbreviate $\delta_k(v) = \delta_{B_k}(v)$.
Observe that $\{\delta_k(v)\}_{k\ge0}$ is a non-increasing sequence. The condition $\delta_k(v) \to 0$ is equivalent to uniform continuity of $v$. By compactness of~$\subshift$, this is equivalent to $v$ belonging to the space of real-valued continuous functions, denoted by $C(X)$. In this work we are interested in the following quantified version of continuity.

\begin{defin}\label{def:p_regular}
Let $p\ge0$. A function $v\in C(X)$ is \textit{$p$-regular} if $\sum_{k=1}^\infty k^{p}\delta_k(v)<\infty$.
\end{defin}

If $v$ is H\"older continuous with respect to the metric $\rho$, then~$\delta_k(v)$ decays  to zero exponentially and $p$-regularity is satisfied for any $p\ge0$. 

\begin{remark}\label{rem:sum_volume}
A $(d-1)$-regular function is often referred simply as a regular function \cite{Kel98}. Since  $|B_k\setminus B_{k-1}|$ is asymptotically equivalent to  $k^{d-1}$, it follows that there exists a constant $C_d > 0$ such that for any non-negative sequence $\{a_k\}_{k\ge1}$,
\[
\sum_{i\in\Z^d\setminus{\{\bf0\}}}a_{\|i\|_\infty}=\sum_{k=1}^\infty|B_k\setminus B_{k-1}|a_k
\le C_d \sum_{k=1}^\infty k^{d-1}a_k.
\]
This shows that a function $v$ is $(d-1)$-regular if and only if $\sum_{i\in\Z^d\setminus{\{\bf0\}}}\delta_{\|i\|_\infty}(v)<\infty$.
\end{remark}

\subsubsection{Finitary Coding}\label{subsec:ffiid}

An \textit{iid system}, also known as  a \textit{Bernoulli scheme}, is the shift action on an iid random field. More formally, an iid system is a tuple $(\Lambda^{\Z^d},\mathcal{F}^{\otimes\Z^d},\P^{\Z^d}, \mathcal{S} )$, where $(\Lambda, \mathcal{F}, \P)$ is a standard Borel probability space, $(\Lambda^{\Z^d},\mathcal{F}^{\otimes\Z^d},\P^{\Z^d})$ is the associated product space, and $\mathcal{S} = \{S^i\}_{i \in \Z^d}$ denotes the group of shift operators on $\Lambda^{\Z^d}$.

We say that $(\subshift, \mathcal{B},\mu, \mathcal{T})$ is a \textit{finitary factor of an iid system} (ffiid), if there exists an iid system, a shift-invariant $G_\delta$ set $Y \subseteq \Lambda^{\Z^d}$ with $\P^{\Z^d}(Y)=1$ such that, denoting by $\mathcal{A}= \mathcal{F}^{\otimes\Z^d}\vert_Y$ the induced $\sigma$-algebra on~$Y$, the following two conditions hold:

\begin{enumerate}[I]
\item \textbf{Factor Map.} There exists an $\mathcal A$-measurable map $\pi\colon Y \to \subshift$ such that $\pi_*\big(\restr{\smash{\P^{\Z^d}}}{Y}\big)=\mu$ and $T^i\circ \pi =\pi\circ S^i$ for all $i \in \Z^d$. Such a map is commonly called \emph{factor map}.

\item\label{item:coding_radius}\textbf{Coding Radius.} There exists an $\mathcal A$-measurable function $R\colon Y\to \Z_{\ge0}$, such that whenever  $y,y'\in Y$ satisfy $\restr{y}{B_{R(y)}}=\restr{y'}{B_{R(y)}}$, then $(\pi y)_{\mathbf{0}}=(\pi y')_{\mathbf{0}}$. Such a function is usually called a \textit{coding radius of~$\pi$}.
\end{enumerate}

Henceforth, we assume that the system $(\subshift, \mathcal{B}, \mu,\mathcal{T})$ is an ffiid with factor map $\pi$ and coding radius $R$. 
For the remainder of the paper, we omit the explicit restriction to $Y$ when writing~$\P^{\Z^d}$, and whenever the expectation operator 
$\E$
appears without a subscript, it is implicitly taken with respect to $\P^{\Z^d}$.
We denote the tail of the coding radius $R$ by $\omega(t)=\P^{\Z^d}(R>t)$, $t\in\R$. 
Moreover, we let $R_i=R\circ S^i\colon Y\to\Z_{\ge0}$ be the coding radius at position $i\in\Z^d$, which is well-defined because~$Y$ is $\mathcal{S}$-invariant. Since $S^i$ is measure-preserving, $R_i$ and $R$ are equal in distribution. We remark that, if the configurations $y,y'\in Y$ coincide in the box $i+B_{R_i(y)}$, then $(\pi y)_i=(\pi y')_i$: this follows by the definition of $R$ and the identity
$(\pi S^i y)_{\mathbf{0}}= (T^i\pi y)_{\mathbf{0}}=
(\pi y)_i$.

For integers $0\le m\le k$, we write
\[\textstyle
G_{k,m}=\bigl\{y\in Y:\max_{i\in B_m}R_i(y)\le k-m\bigr\}.
\]
Observe that $G_{k,m}$ is a subset of configurations $y$ for which $\restr{\pi(y)}{B_m}$ is determined by the values~$\restr{y}{B_k}$. By stationarity,
\begin{equation}\label{eq:non_good_set}
\P^{\Z^d}(G^c_{k,m})\le \sum_{i\in B_m}\P^{\Z^d}(R_i>k-m)=
|B_m|\ \P^{\Z^d}(R>k-m).
\end{equation}

\subsection{Main Results}\label{sec:main_results}

Let $\{\Gamma_n\}_{n\ge0}$ be a sequence of finite subsets of $\Z^d$. We assume that the sequence is \textit{Følner}, that is, 
\[
\lim_{n\to\infty} \frac{|\Gamma_n\cap (\Gamma_n-j)|}{|\Gamma_n|} = 1, \qquad \text{for each } j \in \Z^d.
\]
We denote by $\longrightarrow_d$ convergence in distribution, and  by $\mathcal N(0,\sigma^2)$ a Gaussian random variable with mean zero and variance~$\sigma^2>0$.

\begin{thm}[CLT]\label{thm:CLT}
Let $v\in C(X)$ with $\E_\mu[v]=0$, and assume that there exist $p\ge d-1$ and $s>2d+d^2/(p+1)$ such that $v$ is $p$-regular and $\E[R^s]<\infty$. 
Then, the series 
\begin{equation}\label{eq:sigma^2}
\sigma_v^2 = \sum_{i\in\Z^d}\E_\mu[v(v\circ T^i)]
\end{equation}
converges and is non-negative. Moreover, if $\sigma_v^2>0$, then
\[
|\Gamma_n|^{-1/2}\sum_{i\in\Gamma_n}v\circ T^i\longrightarrow_d\mathcal N(0,\sigma_v^2).
\]
\end{thm}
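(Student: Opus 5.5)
Our plan is to lift $v$ to the iid field and apply the CLT of El Machkouri, Volný and Wu~\cite{ElMVolWu13} for functionals of iid random fields. Set $f=v\circ\pi\colon Y\to\R$, so that $f\circ S^i=v\circ T^i\circ\pi$. Hence $\sum_{i\in\Gamma_n}v\circ T^i$ under $\mu$ has the same law as $\sum_{i\in\Gamma_n}f\circ S^i$ under $\P^{\Z^d}$, and $\E[f]=\E_\mu[v]=0$. The result of~\cite{ElMVolWu13} applies to stationary fields $X_i=g(\{\varepsilon_{i-j}\}_j)$ with $\varepsilon$ iid. It gives convergence of $\sum_{i}\mathrm{Cov}(X_0,X_i)$ and the CLT along Følner-type sequences, provided the physical dependence measure is summable: $\sum_{i\in\Z^d}\delta_{i,2}<\infty$. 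Here $\delta_{i,2}=\|X_0-X_0^{*}\|_2$, where $X_0^*$ is obtained by replacing $\varepsilon_i$ with an independent copy $\varepsilon_i'$. The identity $\sigma_v^2=\sum_i\E[f\,(f\circ S^i)]$ equals the series~\eqref{eq:sigma^2}, and its non-negativity follows as the limit of $\mathrm{Var}(\sum_{\Gamma_n})/|\Gamma_n|$. So the whole task reduces to estimating $\delta_{i,2}$ for $f$.

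To bound $\delta_{i,2}$ with $\|i\|_\infty=k$, fix $m=m(k)\le k$ to be chosen, say $m\approx\theta k$ with $\theta<1$ or a power $k^\alpha$. Let $y^*$ be $y$ with coordinate $i$ resampled. On the event $G_{k-1,m}(y)\cap G_{k-1,m}(y^*)$, the restrictions of $\pi y$ and $\pi y^*$ to $B_m$ agree, since both are determined by the values on $B_{k-1}$, which do not contain $i$. Therefore $|f(y)-f(y^*)|\le\delta_m(v)$ on this event, and in all cases $|f(y)-f(y^*)|\le 2\|v\|_\infty$. Using~\eqref{eq:non_good_set} and the fact that $y^*$ has the same law as $y$, we get
\[
\delta_{i,2}^2\le \delta_m(v)^2+8\|v\|_\infty^2\,|B_m|\,\omega(k-1-m).
\]
Thus $\delta_{i,2}\lesssim \delta_m(v)+m^{d/2}\omega(k-1-m)^{1/2}$. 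By Markov, $\omega(t)\le \E[R^s]t^{-s}$.

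The main obstacle is choosing $m$ so that $\sum_k k^{d-1}\delta_{i,2}$ converges under exactly the hypothesis $s>2d+d^2/(p+1)$. The first term is handled by $p$-regularity. Take $m=\lfloor k^{\alpha}\rfloor$ with $\alpha\le1$. Since $\delta_m$ is non-increasing, $\sum_k k^{d-1}\delta_{\lfloor k^\alpha\rfloor}(v)\lesssim\sum_m m^{(d/\alpha)-1}\delta_m(v)$, which is finite when $d/\alpha-1\le p$, i.e.\ $\alpha\ge d/(p+1)$. This is where $p\ge d-1$ is used, to allow $\alpha\le1$. The second term is about $\sum_k k^{d-1}k^{\alpha d/2}k^{-s/2}$, which is finite when $s>2d+\alpha d$. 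Choosing $\alpha=d/(p+1)$ gives the condition $s>2d+d^2/(p+1)$, matching the statement. The case $\alpha=1$ needs $m=k/2$ instead of $m=k$, which affects only constants.

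The remaining step is to check carefully that the hypotheses of~\cite{ElMVolWu13} (stationary field, $L^2$, summable $\delta_{i,2}$, Følner sequence) are met; this yields the CLT whenever $\sigma_v^2>0$. If their theorem requires a slightly different summability condition, such as a $p$-th moment dependence measure, the same bounds apply because $f$ is bounded.
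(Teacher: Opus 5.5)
Your proposal is correct and follows the paper's proof essentially step for step. You lift to $f=v\circ\pi$ and bound the physical dependence measure at distance $k$ by $\delta_m(v)+|B_m|^{1/2}\omega^{1/2}(k-1-m)$ using the good event $G_{k-1,m}$. You then choose $m\approx k^{d/(p+1)}$ to get dependence class $2$ exactly when $s>2d+d^2/(p+1)$, and conclude with El Machkouri--Voln\'y--Wu. The only differences are cosmetic: the paper takes $m_k=\lfloor k^{\alpha}/2\rfloor$, which avoids your separate patch for $\alpha=1$, and the coding-radius property only needs the good event for $y$, not for both $y$ and $y^*$.
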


In our next results, under more stringent conditions on the regularity of $v$ and the tail of the coding radius, we will prove a functional CLT and describe rates of convergence in the CLT. We make a note that under those conditions the series from~\eqref{eq:sigma^2} converges and is non-negative. 

For $t\in[0,1]^d$, we define $A_t=\prod_{i=1}^d[0,t_i]$.
For a fixed $v\colon \subshift\to\R$ and any $n\ge0$, we define the process $W_n = \{W_{n,t}(v)\}_{t\in[0,1]^d}$ as
\begin{equation*}
W_{n,t}(v)=\sum_{i\in\{1,\dots,n\}^d}\lambda(nA_t\cap Q_i)(v\circ T^i),    
\end{equation*}
where $Q_i=(i_1-1,i_1]\times\dots\times(i_d-1,i_d]$ and $\lambda$ is the Lebesgue measure on $\R^d$. 

A standard \textit{Brownian sheet} on $[0,1]^d$  is a zero-mean Gaussian random field $W=\{W_t\}_{t\in[0,1]^d}$ with covariance 
$\E[W_sW_t]=\prod_{i=1}^d\min(s_i,t_i)$,  see~\cite{Kho02}. We view $W_n$ and $W$ as random elements on the space of continuous functions $\mathcal C([0,1]^d)$, endowed with the supremum norm. We denote by $\longrightarrow_w$ weak convergence for probability measures on this space.

\begin{thm}[Convergence to Brownian Sheet]\label{thm:FCLT} Let $v \in C(X)$ with $\E_\mu[v]=0$. Suppose that there exist $p\ge d-1$ and $s>2d^2+d^2/(p+1)$ such that $v$ is $p$-regular and $\E[R^s]<\infty$. 
If $\sigma_v^2$ from \eqref{eq:sigma^2} is positive, then $n^{-d/2}W_n(v)\longrightarrow_w\sigma_v W$. 
\end{thm}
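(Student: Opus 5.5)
The approach is the one announced in the introduction: lift $v$ to the iid field and apply the functional CLT of El Machkouri, Volný and Wu~\cite{ElMVolWu13} for functionals of iid random fields. Set $f=v\circ\pi\colon Y\to\R$. Then $f\circ S^i=v\circ T^i\circ\pi$, and since $\pi_*\P^{\Z^d}=\mu$, the process $W_n(v)\circ\pi$ has the same law as $W_n(v)$. It therefore suffices to prove $n^{-d/2}W_n(f)\longrightarrow_w\sigma_vW$ for the stationary field $\{f\circ S^i\}$ built on iid inputs. Their result asks for a summable physical dependence measure. Concretely, let $y^{*,j}$ be $y$ with the coordinate $y_j$ replaced by an independent copy. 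We need $\Delta_{q}:=\sum_{j\in\Z^d}\|f(y)-f(y^{*,j})\|_{L^q}<\infty$ for some $q>2$; I recall the Brownian sheet version needs $q$ large in terms of $d$, together with a polynomial decay of the dependence coefficients. The first task is to pin down the exact moment and decay hypothesis of~\cite{ElMVolWu13}, which I expect to be of the form $\sum_j \|f-f^{*,j}\|_q<\infty$ with $q>2$ plus tightness, or an $m$-approximation rate. Then translate it into conditions on $\delta_k(v)$ and $\omega$.

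The key estimate bounds $\|f-f^{*,j}\|_q$ for $\|j\|_\infty=\ell$. Fix $m\le k$ with $k<\ell$. On $G_{k,m}(y)\cap G_{k,m}(y^{*,j})$, the patterns $\pi(y)|_{B_m}$ and $\pi(y^{*,j})|_{B_m}$ are both determined by coordinates in $B_k$, where $y$ and $y^{*,j}$ agree. So $|f(y)-f(y^{*,j})|\le\delta_m(v)$ there. Off this set the difference is at most $2\|v\|_\infty$. By~\eqref{eq:non_good_set},
\[
\|f-f^{*,j}\|_q\le \delta_m(v)+2\|v\|_\infty\bigl(2|B_m|\,\omega(k-m)\bigr)^{1/q}.
\]
Choose $k=\ell-1$ and $m=\lfloor \ell^{a}\rfloor$ for some $a\in(0,1)$. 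By Markov's inequality, $\omega(\ell/2)\lesssim \ell^{-s}$. Summing over $j$ with weight $\ell^{d-1}$ (Remark~\ref{rem:sum_volume}) gives two sums. The first is $\sum_\ell \ell^{d-1}\delta_{\ell^a}(v)$. Using monotonicity of $\delta$ and $p$-regularity, this is finite provided $a(p+1)\ge d$, i.e.\ $a\ge d/(p+1)$; for $p\ge d-1$ this is admissible. The second is $\sum_\ell \ell^{d-1}\ell^{ad/q}\ell^{-s/q}$, which is finite when $s>q(d+ad/q)=qd+ad$. With $a=d/(p+1)$ this reads $s>qd+d^2/(p+1)$. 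This matches Theorem~\ref{thm:CLT} with $q=2$, and matches the present hypothesis if the functional result requires a moment of order $q=2d$. Indeed, the Brownian sheet tightness in~\cite{ElMVolWu13} uses $L^q$ with $q>2d$, or rather $q\ge 2d$ with strict slack. The strict inequality on $s$ leaves room to take $q$ slightly larger than $2d$.

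Next I identify the limiting variance. Summability of $\Delta_2$ already gives absolute convergence of $\sum_i\E[f\,f\circ S^i]$, via the standard projection bound $|\mathrm{Cov}(f,f\circ S^i)|\le\sum_j\|P_jf\|_2\|P_{j}(f\circ S^i)\|_2$. The sum equals $\sigma_v^2$ by the factor property. With $\sigma_v^2>0$, the theorem of~\cite{ElMVolWu13} yields convergence to $\sigma_vW$.

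The main obstacle is matching the exact hypothesis of~\cite{ElMVolWu13} for the Brownian sheet. This concerns whether their condition is stated via $\sum_j\|f-f^{*,j}\|_q$ with $q>2d$, or via a weighted sum or $m$-dependent approximation rate. It also requires checking that the exponent bookkeeping (the choice of $a$ and the room between $q$ and $2d$) produces exactly $s>2d^2+d^2/(p+1)$. If their condition is instead a rate on $\|f-\E[f\mid\mathcal F_{B_\ell}]\|_q$, the same splitting applies. In that case I use $\E[f\mid\mathcal F_{B_\ell}]$ as the best approximation and bound $\|f-f_\ell\|_q$ by $\delta_m(v)+C(|B_m|\omega(\ell-m))^{1/q}$. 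The same optimisation over $m$ follows.
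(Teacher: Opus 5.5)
Your proposal is correct and is essentially the paper's proof: you lift to $f=v\circ\pi$, split on the good set to get $\theta_{j,q}(f)\lesssim\delta_m(v)+\bigl(|B_m|\,\omega(\|j\|_\infty-1-m)\bigr)^{1/q}$ (the paper's $\xi^{(q)}_k$ bound), pick $q>2d$ with $s>dq+d^2/(p+1)$, and appeal to Theorem~2(i) of \cite{ElMVolWu13}, whose hypothesis is exactly $\Delta_q<\infty$ for some $q>2(V-1)$, where $V=d+1$ is the Vapnik--Chervonenkis index of the quadrants $\{A_t\}$; this settles your uncertainty about the threshold $2d$. One small repair is needed in the boundary case $p=d-1$: there you are forced to take $a=d/(p+1)=1$, so $m=\lfloor \ell\rfloor>k=\ell-1$ and $k-m\ge\ell/2$ fails; take $m=\lfloor \ell^a/2\rfloor$ instead, as the paper does with $m_k=\lfloor k^\alpha/2\rfloor$, and both of your sums are unchanged up to constants.
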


\begin{remark}\label{remark:CLT_and_FCLT}
Note that in both Theorems~\ref{thm:CLT} and~\ref{thm:FCLT} one can trade regularity of the observable
for a weaker moment condition on $R$. For a zero-mean function $v\colon \subshift\to\R$, we obtain the CLT and convergence to  Brownian sheet in the following two extreme cases:
\begin{enumerate}[(i)]
\item\label{item:minimal} $v$ is $(d-1)$-regular, and $\E[R^s]<\infty$ for some $s>3d$ for the CLT, or $s>2d^2+d$ for convergence to Brownian sheet.
\item\label{item:maximal} $v$ is $p$-regular for every $p\ge0$, and $\E[R^s]<\infty$ for some $s>2d$ for the CLT, or $s>2d^2$ for convergence to Brownian sheet.
\end{enumerate}
Point~\ref{item:minimal} follows simply by taking $p=d-1$ in the theorems, whereas point~\ref{item:maximal} follows by choosing~$p$ sufficiently large.
\end{remark}

We let $\Phi \colon \mathbb{R} \to [0,1]$ denote the cumulative distribution function of a standard Gaussian.  To state effective rates in the CLT, we assume that $\Gamma_n=[0,n-1]^d\cap\Z^d$.
For $\epsilon>0$, we define the function $h_\epsilon\colon \R^+\to\R$ as
\begin{equation}\label{eq:h_eps(q)}
h_\epsilon(q) = \bigg(\frac32-\frac1q\bigg)d+\bigg(\frac{d^2}{\epsilon}-2d\bigg)\bigg(\frac{q+1}{q}\bigg)-\frac12.
\end{equation}

\begin{thm}[Rates in the CLT]\label{thm:rates_CLT}
Let $v\in C(X)$ with $\E_\mu[v]=0$ and  let $\epsilon\in(0,d/2)$. Suppose there exists $q \ge 3$ such that $v$ is $p$-regular and $\E[R^s] < \infty$ for some parameters $p$ and $s$ satisfying
\begin{equation}\label{eq:conditions_p_and_s}
p  \ge h_\epsilon(q) 
\qquad \text{and} \qquad 
s > \big(h_\epsilon(q) + 1\big)\big(q + d/(p+1)\big).
\end{equation}
If $\sigma_v^2$ from \eqref{eq:sigma^2} is positive, then there is $C>0$ such that for any $n\ge1$,
\[
\sup_{t\in\R}\bigg|\mu\bigg(\frac{\sum_{i\in \Gamma_n} v\circ T^i}{n^{d/2}}\le t\bigg)-\Phi(t/\sigma_v)\bigg|\le C\max\{n^{-d/2+\epsilon },n^{-1}\}.
\]
\end{thm}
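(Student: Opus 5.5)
We lift the observable to the iid field and apply a Berry–Esseen-type theorem for functionals of iid random fields, in the spirit of~\cite{Gir18}. Set $f=v\circ\pi\colon Y\to\R$. Then $f\circ S^i=v\circ T^i\circ\pi$, so the law of $n^{-d/2}\sum_{i\in\Gamma_n}f\circ S^i$ under $\P^{\Z^d}$ equals the law of $n^{-d/2}\sum_{i\in\Gamma_n}v\circ T^i$ under $\mu$. It therefore suffices to bound the Kolmogorov distance for the stationary field $\{f\circ S^i\}$, which is a functional of the iid field $\{y_i\}$. The rate results in~\cite{Gir18} for such fields are stated in terms of the physical dependence measure $\delta_{j,q}=\|f(y)-f(y^{(j)})\|_q$, where $y^{(j)}$ replaces the coordinate $y_j$ by an independent copy. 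Equivalently, one can use the $L^q$ approximation rate $\|f-\E[f\mid y|_{B_k}]\|_q$. The rate obtained is roughly $n^{-d/2+\epsilon}$ (capped by $n^{-1}$), provided $\sum_j \|j\|_\infty^{\alpha}\delta_{j,q}<\infty$ for a suitable exponent $\alpha$ depending on $d,q,\epsilon$. My plan is to choose the parameters so that this requirement is exactly what $h_\epsilon(q)$ encodes.

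\textbf{Key estimate.} For $0\le m\le k$, let $f_k=\E[f\mid y|_{B_k}]$. On $G_{k,m}$ the pattern $\pi(y)|_{B_m}$ is determined by $y|_{B_k}$. Hence $|f-f_k|\le \delta_m(v)$ on $G_{k,m}$, and $|f-f_k|\le 2\|v\|_\infty$ everywhere. Using~\eqref{eq:non_good_set} and Markov's inequality, $\P(R>k-m)\le \E[R^s](k-m)^{-s}$, we get
\[
\|f-f_k\|_q\lesssim \delta_m(v)+\bigl(|B_m|(k-m)^{-s}\bigr)^{1/q}.
\]
The coupling $\delta_{j,q}$ with $\|j\|_\infty=k$ is controlled in the same way: changing $y_j$ does not affect $\pi(y)|_{B_m}$ on $G_{k,m}$. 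We choose $m=m(k)$ to balance the two terms, taking $m\approx k^{\theta}$ with $\theta<1$, so that $k-m\asymp k$. Then $\delta_{m}(v)$ is controlled through $p$-regularity: monotonicity gives $\delta_m(v)\,m^{p+1}\lesssim\sum_{l\le m}l^p\delta_l(v)$, which is bounded, so $\delta_m(v)\lesssim m^{-(p+1)}$. The second term is $\lesssim k^{(d\theta-s)/q}$.

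\textbf{Summability check.} We require $\sum_k k^{d-1}k^{\alpha}\bigl(k^{-\theta(p+1)}+k^{(d\theta-s)/q}\bigr)<\infty$, with $\alpha$ the weight demanded by the cited theorem. This needs $\theta(p+1)>\alpha+d$ and $s>q(\alpha+d)+d\theta$. Taking $\theta$ just above $(\alpha+d)/(p+1)$, the condition becomes $s>(\alpha+d)\bigl(q+d/(p+1)\bigr)$ with $\theta<1$, that is $p+1>\alpha+d$. Matching $\alpha+d=h_\epsilon(q)+1$ reproduces~\eqref{eq:conditions_p_and_s}. The positivity of $\sigma_v^2$ and the identification of the limiting variance with~\eqref{eq:sigma^2} follow from the absolute summability of covariances, already implied by these bounds, and from the pushforward identity.

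\textbf{Main obstacle.} The hard step is locating the precise form of the Berry–Esseen theorem for functionals of iid fields and verifying that its hypotheses translate to the exponent $h_\epsilon(q)$. The interplay between $q$, $\epsilon$ and the factor $(q+1)/q$ suggests an $m$-dependent approximation at scale $n^{\beta}$, combined with a Stein/Berry–Esseen bound of order $n^{-d/2}\cdot n^{\beta d\cdot(\ldots)}$ for $m$-dependent fields. If no ready-made statement fits, I would prove it directly. First, approximate $\sum f\circ S^i$ by $\sum f_{k}\circ S^i$ with $k=n^{\beta}$, which gives a $(2k)$-dependent field. Second, apply Chen–Shao's Berry–Esseen bound for $m$-dependent random fields, which gives error $\lesssim k^{2d}n^{-d/2}$ with third moments. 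Third, control the approximation error in $L^q$ via the estimate above and convert it to Kolmogorov distance by the smoothing inequality $\P(|\Delta|>\eta)\le\eta^{-q}\E|\Delta|^q$ plus $\eta$. Finally, optimize $\beta$ and $\eta$ so the total is $n^{-d/2+\epsilon}$; here $\beta\approx\epsilon/(2d)$ explains the $d^2/\epsilon$ term.
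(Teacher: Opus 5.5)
\textbf{Verdict: there is a genuine gap.} Your architecture is the paper's. You lift to $f=v\circ\pi$, bound $\theta_{j,q}(f)$ for $\|j\|_\infty=k$ by $\delta_m(v)+\bigl(|B_m|\,\omega(k-1-m)\bigr)^{1/q}$ via the good sets $G_{k-1,m}$, take $m\approx k^{\theta}$, and feed this into the Berry--Esseen bound of \cite[Corollary~2]{Gir18}.

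\textbf{The exponent $h_\epsilon(q)$ is imposed, not derived.} You leave the required weight $\alpha$ unspecified and then impose $\alpha+d=h_\epsilon(q)+1$ so that the hypotheses come out right. That reverse-engineers the statement rather than proving it. What has to be done is the following.
Lemma~\ref{lem:B-E_Gir18} requires convergence of $C_q(\beta)=\sum_k k^{d(1-1/q)+\beta}\bigl(\sum_{\|i\|_\infty=k}\theta_{i,q}^2\bigr)^{1/2}$. This is a shell-wise $\ell^2$ quantity, not the $\ell^1$ sum $\sum_j\|j\|_\infty^{\alpha}\theta_{j,q}$ you wrote. Its exponent with $q'=3$ is $\eta=\max\{\gamma d-\frac12,\,-\frac{\gamma\beta q}{2(q+1)}\}$.
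Choosing $\gamma=\epsilon/d^2$ and $\beta=(d^2/\epsilon-2d)(q+1)/q$ makes both entries equal to $-\frac12+\frac{\epsilon}{d}$, so $|\Gamma_n|^{\eta}=n^{-d/2+\epsilon}$. Then $\sum_{\|i\|_\infty=k}\theta_{i,q}^2\lesssim k^{d-1}\bigl(\xi^{(q)}_k(v)\bigr)^2$ turns $C_q(\beta)$ into $\sum_k k^{d(1-1/q)+\beta+(d-1)/2}\xi^{(q)}_k(v)$, whose exponent is exactly $h_\epsilon(q)$. Only at this point does your summability check (Proposition~\ref{prop:regularity_of_rho} with $r=h_\epsilon(q)$) close the argument.
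With the same weight, an $\ell^1$ shell sum would cost an extra factor $k^{(d-1)/2}$. The $\ell^2$ structure is why $(d-1)/2$ rather than $d-1$ appears in $h_\epsilon(q)$.
Separately, your pointwise bound $\delta_m(v)\lesssim m^{-(p+1)}$ only handles $p>h_\epsilon(q)$. The boundary case $p=h_\epsilon(q)$, which the statement allows, needs $m\approx k/2$ together with the summability $\sum_n n^p\delta_n(v)<\infty$ itself, as in the paper's regrouping argument. A pointwise decay rate is not enough there.

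\textbf{The $n^{-1}$ term is not accounted for.} It is not a cap built into the Berry--Esseen rate. It is the bound on the second term of Lemma~\ref{lem:B-E_Gir18},
$\sum_j|\E[f(f\circ S^j)]|\,\bigl|\frac{|\Gamma_n\cap(\Gamma_n-j)|}{|\Gamma_n|}-1\bigr|$,
which accounts for normalizing by $\sigma_v$ instead of the finite-volume variance. Bounding it by $C/n$ needs $\sum_j\|j\|_\infty|\E[f(f\circ S^j)]|<\infty$ and $\sum_{\|j\|_\infty\ge n}|\E[f(f\circ S^j)]|\lesssim n^{-1}$. Absolute summability of the covariances gives no rate at all.
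The paper gets both from the projection decomposition $|\E[f(f\circ S^j)]|\le\sum_\ell\theta_{-\tau(\ell),2}(f)\,\theta_{j-\tau(\ell),2}(f)$, combined with $\sum_k k^d\xi^{(2)}_k(v)<\infty$ (Propositions~\ref{prop:covariance_tail}, \ref{prop:decay_covariances_v} and~\ref{prop:correlation_decay}).
Your fallback $m$-dependent route has the same unaddressed variance mismatch, since Chen--Shao normalizes by the variance of the truncated sum. It also needs a moment inequality for $\sum_{i\in\Gamma_n}(f-f_k)\circ S^i$ that exploits the dependence structure. Bounding that sum through $\|f-f_k\|_q$ and the triangle inequality loses a factor $n^{d/2}$.
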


For any $\epsilon\in(0,d/2)$, we define
\[\textstyle
p_{\min}(\epsilon) = \inf_{q \ge 3} h_\epsilon(q) =
\begin{cases}
\frac{d^2}{\epsilon} - \frac{d+1}{2} & \text{if } \epsilon \in (0, d/3] \\[1ex]
\frac{4d^2}{3\epsilon} - \frac{3d+1}{2} & \text{if } \epsilon \in (d/3, d/2).
\end{cases}
\]

\begin{cor}
\label{cor:CLT-rate}
Let $v\in C(X)$ with $\E_\mu[v]=0$. Assume that $\E[R^s]<\infty$ for all $s\ge1$. For any $\epsilon\in(0,d/2)$,   
suppose that $v$ is $p$-regular with $p\ge p_{\min}(\epsilon)$. If $\sigma_v^2$ from \eqref{eq:sigma^2} is positive, there is $C>0$ such that for any $n\ge1$,
\[
\sup_{t\in\R}\bigg|\mu\bigg(\frac{\sum_{i\in \Gamma_n} v\circ T^i}{n^{d/2}}\le t\bigg)-\Phi(t/\sigma_v)\bigg|\le C\max\{n^{-d/2+\epsilon },n^{-1}\}.
\]
\end{cor}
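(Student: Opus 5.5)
This follows from Theorem~\ref{thm:rates_CLT} once we check that its hypotheses hold. Since $\E[R^s]<\infty$ for every $s\ge1$, the second condition in \eqref{eq:conditions_p_and_s} holds automatically for any finite choice of $q\ge3$ and $p\ge0$. The corollary therefore reduces to one claim: for $p\ge p_{\min}(\epsilon)$ there is some $q\ge3$ with $h_\epsilon(q)\le p$.

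\textbf{Step 1: the infimum.} Write $a=d^2/\epsilon-2d$. Because $\epsilon<d/2$, we have $d^2/\epsilon>2d$, so $a>0$. Then
\[
h_\epsilon(q)=\tfrac32 d+a-\tfrac12+\frac{a-d}{q}.
\]
As a function of $q$ this is monotone on $[3,\infty)$, and the sign of $a-d$ decides the direction.
\begin{itemize}
\item If $a\ge d$, which is exactly $\epsilon\le d/3$, then $h_\epsilon$ is non-increasing. Its infimum is the limit as $q\to\infty$, namely $\tfrac32d+d^2/\epsilon-2d-\tfrac12=d^2/\epsilon-(d+1)/2$.
\item If $a<d$, i.e.\ $\epsilon>d/3$, then $h_\epsilon$ is increasing. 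Its infimum is attained at $q=3$:
\[
\tfrac32d+\tfrac43a-\tfrac d3-\tfrac12=\tfrac{4d^2}{3\epsilon}-\tfrac{3d+1}{2}.
\]
\end{itemize}
Both values match the formula for $p_{\min}(\epsilon)$.

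\textbf{Step 2: attainment, the only delicate point.} In the case $\epsilon\in(d/3,d/2)$, and in the boundary case $\epsilon=d/3$ where $h_\epsilon$ is constant, the infimum is attained at $q=3$. So $p\ge p_{\min}(\epsilon)=h_\epsilon(3)$ gives the hypothesis of Theorem~\ref{thm:rates_CLT} directly.

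In the case $\epsilon<d/3$, the infimum is only approached as $q\to\infty$. If $p>p_{\min}(\epsilon)$ strictly, pick $q$ large enough that $h_\epsilon(q)\le p$. If $p=p_{\min}(\epsilon)$ exactly, I would use that $\sum_k k^p\delta_k(v)<\infty$ with $\delta_k$ non-increasing. This suggests $p$-regularity might be improvable to slightly larger exponents, but that is not automatic. The cleaner route is to choose a slightly smaller $\epsilon'<\epsilon$ with $p_{\min}(\epsilon')$ still below $p$. However, $p_{\min}$ is decreasing in $\epsilon$, so $p_{\min}(\epsilon')>p_{\min}(\epsilon)$ and the wrong way.

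Instead, I would aim at the rate directly. Take $\epsilon''>\epsilon$ close to $\epsilon$, still in $(0,d/3]$. Then $p_{\min}(\epsilon'')<p$, so Step 2 applies to $\epsilon''$. This yields the bound with $n^{-d/2+\epsilon''}$, which is weaker than the stated rate. So at equality the argument needs care. If the paper intends $p>p_{\min}(\epsilon)$, or if equality can be handled this way, the proof is complete.

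Given $q$, we apply Theorem~\ref{thm:rates_CLT} with that $q$ and any $s>(h_\epsilon(q)+1)(q+d/(p+1))$, which is admissible since all moments of $R$ are finite. This gives the stated bound.
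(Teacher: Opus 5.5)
Your reduction is the same as the paper's proof. Every moment of $R$ is finite, so the condition on $s$ in \eqref{eq:conditions_p_and_s} can always be met, and everything hinges on finding a finite $q\ge3$ with $h_\epsilon(q)\le p$. Your Step~1 computation of $\inf_{q\ge3}h_\epsilon(q)$ is correct.

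The issue you raise in Step~2 is genuine, and the paper's proof does not address it. It asserts that the lower bound on $p$ is ``precisely $p_{\min}(\epsilon)$'' and concludes, which tacitly treats the infimum as a minimum. For $\epsilon<d/3$ one has $h_\epsilon(q)=\tfrac{3}{2}d+a-\tfrac12+(a-d)/q$ with $a>d$, which is strictly decreasing in $q$. So when $p=p_{\min}(\epsilon)$ no admissible $q$ exists, and Theorem~\ref{thm:rates_CLT} cannot be applied at that $\epsilon$.

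You should settle the equality case rather than leave it conditional. Improving the regularity exponent does not work in general. The condition $\sum_k k^p\delta_k<\infty$ gives nothing beyond $p$; think of $\delta_k\asymp k^{-p-1}(\log k)^{-2}$. The proof of Theorem~\ref{thm:rates_CLT} goes through $\sum_k k^{h_\epsilon(q)}\xi^{(q)}_k(v)<\infty$ for some finite $q$. This fails for such $\delta_k$, because $\xi^{(q)}_k(v)\ge\delta_{k-1}(v)$ and $h_\epsilon(q)>p$.

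Your $\epsilon''$ idea does close the case $\epsilon<(d-2)/2$. Pick $\epsilon''\in(\epsilon,(d-2)/2]$. Then $p=p_{\min}(\epsilon)>p_{\min}(\epsilon'')$ gives an admissible $q$ for $\epsilon''$. Both $\max\{n^{-d/2+\epsilon''},n^{-1}\}$ and $\max\{n^{-d/2+\epsilon},n^{-1}\}$ equal $n^{-1}$, so you lose nothing. Since $(d-2)/2\ge d/3$ for $d\ge6$, this handles every $\epsilon<d/3$ when $d\ge 6$.

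What neither your argument nor the paper's delivers is the case $p=p_{\min}(\epsilon)$ with $\epsilon\in(0,d/3)$ and $\epsilon\ge(d-2)/2$, which is possible only for $d\le5$. In that range the statement as proved should read $p>p_{\min}(\epsilon)$. This matches the paper's own remark after the corollary. For $d=3$, $\epsilon=1/2$, which is exactly such a boundary case with $p_{\min}(1/2)=16$, the remark requires $p>16$ strictly.
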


\begin{proof}
Since the parameter $s$ can be chosen arbitrarily large, the moment condition for $R$ in Theorem~\ref{thm:rates_CLT} is trivially satisfied. So, it suffices to focus on the lower bound for $p$, which is precisely~$p_{\min}(\epsilon)$. The result then follows immediately.
\end{proof}

\begin{remark}
In comparison with the classical Berry-Esseen rates of order $O(n^{-d/2})$ (see for example~\cite[Chap.\,3, Sec.\,11]{Shi16}), 
the null sequence $\max\{n^{-d/2+\epsilon}, n^{-1}\}$ introduces a dimensional constraint. The term of order $O(n^{-1})$ 
dominates if and only if $\epsilon \le (d-2)/2$.
In dimensions $d=1,2$ this is impossible and the rate of convergence is $O(n^{-d/2+\epsilon})$.
For $d \ge 3$, the threshold $\epsilon = (d-2)/2 > 0$ is achievable. Under the assumptions of Corollary~\ref{cor:exp-ssm}, evaluating~$p_{\min}$ at this critical value gives the required regularity of $v$ to hit the $O(n^{-1})$ barrier:
\[
p_{\min}\bigg(\frac{d-2}{2}\bigg) =
\begin{cases}
\frac{2d^2}{d-2} - \frac{d+1}{2} & \text{if } 3 \le d \le 6, \\
\frac{8d^2}{3(d-2)} - \frac{3d+1}{2} & \text{if } d \ge 7.
\end{cases}
\]
For instance, for $d=3$, any observable with $p > 16$ attains the $O(n^{-1})$ rate, which is the best that we can achieve for our method.
\end{remark}

\subsection{Applications to Statistical Mechanics}\label{subsec:stat_mec}

Let $\mu$ be a shift-invariant Markov random field on $\Z^d$ with topological support $X \subseteq A^{\Z^d}$. Given finite sets $V,W \subseteq \Z^d$, let $\operatorname{dist}(V,W)$ be the graph distance from $V$ to $W$ in $\Z^d$ according to the canonical Cayley graph, and let $\partial V$ denote all the points at distance $1$ from $V$. Let $X_{\partial V} = \{x|_{\partial V} : x \in X\} \subseteq A^{\partial V}$. For $\tau \in X_{\partial V}$, define the finite-volume conditional distribution
\[
  P_V^\tau(\eta) := \mu\bigl( \{x\in X:x|_V=\eta\}
    \,\big|\,
    \{x\in X:x|_{\partial V}=\tau\}
  \bigr),
  \qquad \eta\in A^V.
\]
Given $U\subseteq V$, let $P_{V,U}^\tau$ denote its marginal on $A^U$. If $\tau,\tau'\in X_{\partial V}$, set
\[
\Sigma_V(\tau,\tau')
:=
\{i\in\partial V:\tau_i\ne\tau'_i\}.
\]
We say that $\mu$ satisfies \emph{exponential strong spatial mixing} if there are constants $C,c>0$ such that
\begin{equation}
\label{eq:exp-ssm}
\bigl\|P_{V,U}^{\tau}-P_{V,U}^{\tau'}\bigr\|_{\rm TV} \le C|U|\exp\left(
-c\,\operatorname{dist} \bigl(U,\Sigma_V(\tau,\tau')\bigr)
\right)
\end{equation}
for every finite $V\subseteq\Z^d$, every $U\subseteq V$, and every
$\tau,\tau'\in X_{\partial V}$, with the convention that $\operatorname{dist}(U,\varnothing)=+\infty$.

In \cite[Theorem 1.1]{spi20}, Spinka proved that, for $d \geq 2$, if $\mu$ satisfies exponential strong spatial mixing, then $\mu$ is an ffiid where the tail of the coding radius decays exponentially. In particular, this implies that $R$ has moments of all orders.

\begin{cor}
\label{cor:exp-ssm}
Suppose that $\mu$ satisfies exponential strong spatial mixing, and let $v \in C(X)$ be $(d-1)$-regular with $\E_\mu[v]=0$. Then $\sigma_v^2$ from \eqref{eq:sigma^2} converges and is non-negative. Moreover, if $\sigma_v^2 > 0$, then
\[
|\Gamma_n|^{-1/2}\sum_{i\in\Gamma_n}v\circ T^i\longrightarrow_d\mathcal N(0,\sigma_v^2)
\]
for any Følner sequence $\{\Gamma_n\}_{n \ge 1}$ for $\Z^d$, and
\[
n^{-d/2}W_n(v)\longrightarrow_w\sigma_v W.
\]
\end{cor}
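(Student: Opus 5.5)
This corollary follows directly from Theorems~\ref{thm:CLT} and~\ref{thm:FCLT}, through the extreme case~\ref{item:minimal} of Remark~\ref{remark:CLT_and_FCLT}. Nothing new needs to be proved: we only have to check that the hypotheses on the coding radius hold. The argument runs as follows.

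First, we invoke Spinka's result~\cite[Theorem 1.1]{spi20}. For $d\ge2$, exponential strong spatial mixing~\eqref{eq:exp-ssm} makes $(\subshift,\mathcal B,\mu,\mathcal T)$ an ffiid whose coding radius $R$ satisfies $\omega(t)=\P^{\Z^d}(R>t)\le Ce^{-ct}$ for some $C,c>0$. From this tail bound we get the moment bound
\[
\E[R^s]=\sum_{k\ge1}\big(k^s-(k-1)^s\big)\,\P^{\Z^d}(R\ge k)\le \sum_{k\ge1} s k^{s-1} C e^{-c(k-1)}<\infty
\]
for every $s\ge1$. In particular, we may choose any $s>2d^2+d$; this also gives $s>3d$, since $2d^2+d\ge 3d$ for $d\ge1$.

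Second, we apply Theorem~\ref{thm:CLT} with $p=d-1$. The function $v$ is $(d-1)$-regular by assumption, $\E_\mu[v]=0$, and the theorem requires $s>2d+d^2/d=3d$. With our choice of $s$, the theorem shows that the series $\sigma_v^2$ in~\eqref{eq:sigma^2} converges and is non-negative. If moreover $\sigma_v^2>0$, it gives the CLT along an arbitrary Følner sequence $\{\Gamma_n\}$. Third, we apply Theorem~\ref{thm:FCLT}, again with $p=d-1$. Here the requirement is $s>2d^2+d^2/d=2d^2+d$, which our $s$ also meets, so the theorem gives $n^{-d/2}W_n(v)\longrightarrow_w\sigma_vW$.

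The main point needing care is that Spinka's theorem, as quoted, covers only $d\ge2$. I would first check whether~\cite{spi20} also treats $d=1$, or whether the corollary is meant for $d\ge2$. If neither holds, I would try to show directly that in dimension one exponential strong spatial mixing yields a finitary coding with exponential tails, or else restrict the statement to $d\ge2$. Apart from this, the argument is purely a check of parameters.
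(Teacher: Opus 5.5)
Your proposal is correct and matches the paper's proof: it cites Spinka's theorem to get $\E[R^s]<\infty$ for all $s$, then applies Theorems~\ref{thm:CLT} and~\ref{thm:FCLT} with $p=d-1$, as in Remark~\ref{remark:CLT_and_FCLT}\ref{item:minimal}. Your caveat about $d\ge2$ is fair, since the paper invokes the same citation without addressing $d=1$.
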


\begin{proof}
By \cite[Theorem 1.1]{spi20}, $\mu$ is an ffiid with $\mathbb E[R^s] < \infty$ for every $s \ge 0$. In view of Remark~\ref{remark:CLT_and_FCLT}\ref{item:minimal}, both Theorem \ref{thm:CLT} and Theorem \ref{thm:FCLT} apply for $(d-1)$-regular functions on $X$.
\end{proof}

\begin{cor}
\label{cor:exp-ssm-2}
Suppose that $\mu$ satisfies exponential strong spatial mixing, and let $v \in C(X)$ be Hölder continuous with $\E_\mu[v]=0$. Then $\sigma_v^2$ from \eqref{eq:sigma^2} converges and is non-negative. Moreover, if $\sigma_v^2 > 0$, then for any $\epsilon\in(0,d/2)$ there is $C>0$ such that for any $n\ge1$ and $\Gamma_n = [0,n-1]^d \cap \Z^d$,
\[
\sup_{t\in\R}\bigg|\mu\bigg(\frac{\sum_{i\in \Gamma_n} v\circ T^i}{n^{d/2}}\le t\bigg)-\Phi(t/\sigma_v)\bigg|\le C\max\{n^{-d/2+\epsilon },n^{-1}\}.
\]
\end{cor}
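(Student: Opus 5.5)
This corollary follows by combining Spinka's finitary coding theorem with Corollary~\ref{cor:CLT-rate}; no new probabilistic input is needed.

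\emph{Step 1: the coding radius has all moments.} Since $\mu$ satisfies exponential strong spatial mixing, \cite[Theorem 1.1]{spi20} shows that $(\subshift,\mathcal B,\mu,\mathcal T)$ is an ffiid whose coding radius $R$ has an exponentially decaying tail. Thus $\omega(t)\le Ce^{-ct}$, and
\[
\E[R^s]=\sum_{k\ge1}\bigl(k^s-(k-1)^s\bigr)\,\omega(k-1)<\infty \qquad\text{for every } s\ge 1 .
\]

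\emph{Step 2: Hölder functions are regular of every order.} Let $v$ be $\alpha$-Hölder with respect to $\rho$. If $\restr{x}{B_k}=\restr{x'}{B_k}$, then $s(x,x')\ge k+1$, so $\rho(x,x')\le 2^{-(k+1)}$. Hence
\[
\delta_k(v)\le C_v\, 2^{-\alpha(k+1)},
\]
and therefore $\sum_k k^p\delta_k(v)<\infty$ for every $p\ge0$. In other words, $v$ is $p$-regular for all $p\ge0$.

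\emph{Step 3: the variance series converges.} By Step 2, $v$ is $p$-regular for arbitrarily large $p$; choose some $p\ge d-1$. By Step 1, $\E[R^s]<\infty$ for some $s>2d+d^2/(p+1)$. Theorem~\ref{thm:CLT} then shows that the series $\sigma_v^2$ in \eqref{eq:sigma^2} converges and is non-negative. Alternatively, this follows from case~\ref{item:maximal} of Remark~\ref{remark:CLT_and_FCLT}.

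\emph{Step 4: the rate.} Assume $\sigma_v^2>0$ and fix $\epsilon\in(0,d/2)$. Then $p_{\min}(\epsilon)$ is a finite number. By Step 2 we may take $v$ to be $p$-regular with $p\ge p_{\min}(\epsilon)$, and by Step 1 $\E[R^s]<\infty$ for all $s\ge1$. These are exactly the hypotheses of Corollary~\ref{cor:CLT-rate} with $\Gamma_n=[0,n-1]^d\cap\Z^d$, which yields the bound $C\max\{n^{-d/2+\epsilon},n^{-1}\}$.

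There is no real obstacle here. The only care needed is the dimensional restriction $d\ge 2$ in \cite{spi20}; the case $d=1$ is either excluded implicitly or handled by the analogous one-dimensional finitary coding results.
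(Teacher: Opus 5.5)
Your proposal is correct and follows the paper's proof: \cite[Theorem 1.1]{spi20} gives an ffiid with all moments of $R$ finite, Hölder continuity gives $p$-regularity for every $p$, and Corollary~\ref{cor:CLT-rate} then yields the rate. Your Step~3, which derives convergence and non-negativity of $\sigma_v^2$ from Theorem~\ref{thm:CLT}, and your remark on the $d\ge 2$ restriction in \cite{spi20}, make explicit two points that the paper's proof leaves implicit.
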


\begin{proof}
By \cite[Theorem 1.1]{spi20}, $\mu$ is an ffiid with $\mathbb E[R^s] < \infty$ for every $s \ge 0$. As $v$ is Hölder continuous, it is $p$-regular for every $p \ge 0$. It remains to apply Corollary~\ref{cor:CLT-rate}.
\end{proof}

We next list some models and regimes that satisfy exponential strong spatial mixing, and therefore Corollaries~\ref{cor:exp-ssm}~and~\ref{cor:exp-ssm-2} apply.

\begin{enumerate}
\item \emph{Ising model.} The unique Gibbs measure of the ferromagnetic Ising model at inverse temperature $0\le \beta< \beta_c(d)$ satisfies exponential strong spatial mixing. See \cite[Corollary~1.8]{ding2023}.

\item \emph{Proper colourings.} Let $\alpha>1$ be the solution of $\alpha^{\alpha}=e$, and set $\gamma := \frac{4\alpha^3-6\alpha^2-3\alpha+4}{2(\alpha^2-1)}$. The unique Gibbs measure for proper $k$-colourings of $\Z^d$ satisfies exponential strong spatial mixing whenever $k>2\alpha d-\gamma$. See \cite[Section~1.1.1]{spi20_SPM} and the references therein.

\item \emph{Hard-core model.} Define $\lambda_c(\Delta) := \frac{\Delta^\Delta}{(\Delta-1)^{\Delta+1}}$, and let $\mu_d$ denote the connective constant of $\Z^d$. The unique hard-core Gibbs measure with activity $\lambda$ satisfies exponential strong spatial mixing whenever $\lambda<\lambda_c(\mu_d)$. See \cite{sinclair2017}.

\item \emph{Graph-homomorphism spaces.} Let $H$ be a finite constraint graph. In \cite[Proposition~5.3]{briceno2017} it was proven that if $\operatorname{Hom}(\Z^d,H)$ has the \emph{unique maximal configuration property}, then the associated Gibbs specification constructed therein satisfies exponential strong spatial mixing at sufficiently large activity. 

\item \emph{Further finite-state models.} The same argument applies to the $q$-type Widom--Rowlinson model and to the beach model. See also \cite{regts2023} for general sufficient conditions.
\end{enumerate}

One can apply the main Theorems~\ref{thm:CLT}, \ref{thm:FCLT}, and \ref{thm:rates_CLT} to the following models, for which the required finitary codings are obtained by separate arguments.

\begin{enumerate}
\item \emph{Random-cluster and Potts models.} Let $q\ge1$. For $0\le p<p_c(q,d)$, the random--cluster measure on $\Z^d$ admits a finitary coding from an iid process whose coding radius has exponential tails. Likewise, for integer $q\ge2$, the unique Gibbs measure of the ferromagnetic $q$-state Potts model at inverse temperature $0\le \beta< \beta_c(q,d)$ admits such a coding. See \cite{harel2022}.

\item \emph{Long-range Ising model.} It was established in \cite[Appendix~A]{Corentin25} that, for $\alpha>2d$ and small enough inverse temperature $\beta$, the long-range ferromagnetic Ising model with couplings constants satisfying $\left|J_{i j}\right| = O\left(\|j-i\|_1^{-\alpha}\right)$ for all $i, j \in \Z^d$, is a ffiid where the coding radius decays at least as fast as $t^{1-\alpha/d}$. Thus, in this regime, the model has polynomial tails and the coding radius has finite $s$-moment for $s < \alpha/d - 2$.

\item \emph{Finite-state edge models.} The results also apply to edge models such as the monomer--dimer model. See \cite{vandenberg1999}.

\end{enumerate}

\section{Proofs}\label{sec:proofs}

In the current section, we assume that the symbolic system $(X,\mathcal B,\mu,\mathcal T)$ is an ffiid, following the conventions introduces in Section~\ref{subsec:ffiid}.

\subsection{Dependence Measures on Random Fields}

Consider the coupled probability space $(\Lambda^{\Z^d}\times \Lambda,\P^{\Z^d}\otimes \P)$ and the function $\phi\colon \Lambda^{\Z^d}\times \Lambda\to \Lambda^{\Z^d}$,
\[
(\phi(x,a))_i=\begin{cases}
x_i&\text{if } i\neq \mathbf 0\\
a&\text{if } i=\mathbf{0}
\end{cases}
.\]
 The following definition, adapted from~\cite{Wu05}, are used to measure the change of a random field when the zero coordinate of a configuration is replaced by an independent copy.
\begin{defin}
\label{def:dependence_class}
For $q\ge1$ and $g\in L^q(\Lambda^{\Z^d})$, we define the \textit{physical dependence measure} of $g$ at site $i\in\Z^d$ as 
\begin{align*}
\theta_{i,q}(g)&=\bigg(\int_{\Lambda^{\Z^d}\times \Lambda}\big|g(S^ix)-g(S^i(\phi(x,a)))\big|^q\diff \big(\P^{\Z^d}\otimes \P\big)(x,a)\bigg)^{1/q}
\\
&=\big\|g\circ S^i-g\circ S^i\circ\phi
\big\|_{L^q(\Lambda^{\Z^d}\times \Lambda)}.   
\end{align*}

We say that the random field $\{g\circ S^i\}_{i\in\Z^d}$ is \textit{of dependence class $q$} if
\[
\Delta_q=\sum_{i\in\Z^d}\theta_{i,q}(g)<\infty.
\]
\end{defin}

\noindent 
While both~\cite{ElMVolWu13} and~\cite{Wu05} refer to the finiteness of $\Delta_q$ as \textit{q-stability}, we adopt the term dependence class 
$q$ to avoid any potential confusion with the distinct theory of $\alpha$-stable processes.
As we will discuss later, a field of dependence class $2$ satisfies the Central Limit Theorem~\cite[Theorem 1]{ElMVolWu13}, whereas the sufficient conditions of as functional CLT in~\cite[Theorem 2]{ElMVolWu13} require a higher dependence class. 

The next result is the first step in connecting the regularity of a function $v\in C(X)$ with the dependency class of the field generated by its lift.

\begin{prop}\label{prop:technicality}
Let $v \in C(X)$, $i\in\Z^d\setminus\{\bf0\}$, and  $m\in\{0,\dots,\|i\|_\infty-1\}$. Assuming that $y\in S^{-i}G_{\|i\|_\infty-1,m}$, then for every configuration $y'\in Y$ such that $y_j=y'_j$ for all $j\in\Z^d\setminus\{\mathbf 0\}$, 
\[
|v\circ\pi(S^{i}y)-v\circ\pi(S^{i}y')|\le \delta_m(v).
\]
\end{prop}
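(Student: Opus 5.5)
The plan is to show that $\pi(S^i y)$ and $\pi(S^i y')$ agree on the box $B_m$. The definition of $\delta_m(v)$ then gives the bound immediately. Write $k=\|i\|_\infty$. Since $y\in S^{-i}G_{k-1,m}$, we have $S^iy\in G_{k-1,m}$, that is,
\[
R_j(S^iy)\le k-1-m \quad\text{for all } j\in B_m .
\]
By the remark in Section~\ref{subsec:ffiid}, for each $j\in B_m$ the symbol $(\pi S^iy)_j$ is determined by the restriction of $S^iy$ to $j+B_{R_j(S^iy)}$. This box is contained in $B_m+B_{k-1-m}=B_{k-1}$.

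Next I check that $S^iy$ and $S^iy'$ coincide on $B_{k-1}$. We have $(S^iy)_l=y_{i+l}$ and $(S^iy')_l=y'_{i+l}$, and $y,y'$ differ at most at the origin. So the two shifted configurations can differ only at $l=-i$. Since $\|-i\|_\infty=k>k-1$, the site $-i$ lies outside $B_{k-1}$. Hence $S^iy$ and $S^iy'$ agree on $B_{k-1}$, and in particular on every box $j+B_{R_j(S^iy)}$ with $j\in B_m$.

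The one delicate point is that the coding-radius property is stated in terms of the radius of one of the two configurations. Here it is $R_j(S^iy)$ that is controlled, not $R_j(S^iy')$, so I apply the property with $S^iy$ as the reference configuration. The property requires both configurations to lie in $Y$. We are given $y'\in Y$, and $Y$ is shift-invariant, so $S^iy,S^iy'\in Y$. The conclusion is $(\pi S^iy)_j=(\pi S^iy')_j$ for every $j\in B_m$.

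It follows that $\restr{\pi(S^iy)}{B_m}=\restr{\pi(S^iy')}{B_m}$. Both configurations lie in $X$, so by the definition of $\delta_m(v)$,
\[
|v(\pi S^iy)-v(\pi S^iy')|\le\delta_m(v).
\]
There is no real obstacle beyond the two checks above: the inclusion $j+B_{k-1-m}\subseteq B_{k-1}$, and the fact that the perturbed site $-i$ falls outside $B_{k-1}$.
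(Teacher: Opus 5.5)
Your proof is correct and follows the paper's argument. The paper likewise uses that $y\in G_{k,n}$ forces $\pi(y)$ and $\pi(y')$ to agree on $B_n$ whenever $y'$ agrees with $y$ on $B_k$, and then notes that $S^iy'$ agrees with $S^iy$ on $B_{\|i\|_\infty-1}$. You also spell out the checks the paper leaves implicit: the inclusion $B_m+B_{k-1-m}\subseteq B_{k-1}$, and that the coding-radius property is applied with $S^iy$ as the reference configuration.
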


\begin{proof} For $0\le n\le k$ and $y\in Y$, we write $[y]_k=\{y'\in Y:\restr{y}{B_k}=\restr{y'}{B_k}\}$
for the cylinder set of~$y$ on the shape $B_k$.
If $y\in G_{k,n}$, then for any $y'\in [y]_k$ the configurations~$\pi(y)$ and  $\pi(y')$ coincide on the whole~$B_{n}$, and we have the useful estimate
\begin{equation*}\label{eq:delta_m}
|v\circ\pi(y)-v\circ\pi(y')|\le\delta_n(v).
\end{equation*}

Assuming now $i$, $m$, $y$, and $y'$ as in the statement, we note that $S^iy'\in[S^iy]_{\|i\|_\infty-1}$. The proof is finished by the estimate above.
\end{proof}

Fix a function $v\in C(X)$. For $q\in[1,\infty)$ and a positive integer $k$, we define
\begin{equation}\label{eq:rho_k}\textstyle
\xi_k^{(q)}(v)=\min_{0\le m\le k-1}\big(\delta_m(v)+|B_m|^{1/q}\omega^{1/q}(k-1-m)\big).
\end{equation}

\begin{prop}\label{prop:regularity_of_rho}
Let $r\ge0$ and $q\ge1$. Assume that  $p\ge{r}$ and $s>(r+1)(q+d/(p+1))$ such that $v$ is $p$-regular and $\E[R^s]<\infty$. Then, $\sum_{k=1}^\infty k^{r}\xi_k^{(q)}(v)<\infty$.
\end{prop}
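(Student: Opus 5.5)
Since $\xi_k^{(q)}(v)$ is a minimum over $m$, we are free to choose $m=m(k)$ for each $k$, and then bound the sum of the two resulting terms separately. We pick $m(k)=\lfloor k^{\alpha}\rfloor$ for a suitable exponent $\alpha\in(0,1)$ to be tuned. For $k$ large this gives $0\le m(k)\le k-1$ and $k-1-m(k)\ge k/2$. Then
\[
\sum_k k^r\xi_k^{(q)}(v)\le \sum_k k^r\delta_{\lfloor k^\alpha\rfloor}(v)+\sum_k k^r|B_{\lfloor k^\alpha\rfloor}|^{1/q}\,\omega^{1/q}(k/2)+O(1),
\]
and we show that each series is finite under a compatible choice of $\alpha$.

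\emph{Second series.} Since $\E[R^s]<\infty$, Markov's inequality gives $\omega(t)\le \E[R^s]\,t^{-s}$. Using also $|B_m|\lesssim m^d$, the general term is
\[
\lesssim k^{r+\alpha d/q-s/q}.
\]
This is summable provided $s>q(r+1)+\alpha d$.

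\emph{First series.} Here we group the indices $k$ according to the value $m=\lfloor k^\alpha\rfloor$. For a fixed $m$, the number of such $k$ is $\lesssim m^{1/\alpha-1}$, and each satisfies $k\lesssim m^{1/\alpha}$. Hence
\[
\sum_k k^r\delta_{\lfloor k^\alpha\rfloor}(v)\lesssim\sum_m m^{(r+1)/\alpha-1}\delta_m(v).
\]
By $p$-regularity this is finite provided $(r+1)/\alpha-1\le p$, that is, $\alpha\ge (r+1)/(p+1)$. Since $p\ge r$, this lower bound is at most $1$, so the choice is admissible.

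\emph{Balancing.} We take $\alpha=(r+1)/(p+1)$, which minimizes the requirement coming from the second series. That requirement becomes
\[
s>q(r+1)+\frac{d(r+1)}{p+1}=(r+1)\big(q+d/(p+1)\big),
\]
which is exactly the hypothesis. In the edge case $\alpha=1$ (when $p=r$) we instead take $m=\lfloor k/2\rfloor$, and the same computation goes through. We also check that $\delta_m(v)$ is monotone, so that replacing $k$ by the block index is harmless.

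\emph{Main obstacle.} The one step needing care is the counting in the first series: showing that the block of $k$ with $\lfloor k^\alpha\rfloor=m$ has size $O(m^{1/\alpha-1})$. This follows from the mean value theorem applied to $(m+1)^{1/\alpha}-m^{1/\alpha}$. The finitely many small $k$, for which $k-1-m(k)<k/2$ or $m(k)>k-1$, only contribute a finite amount, since $\xi_k^{(q)}(v)\le \delta_0(v)+1$.
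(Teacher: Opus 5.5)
Your proposal is correct and follows essentially the same route as the paper: take $m_k\approx k^{\alpha}$ with $\alpha=(r+1)/(p+1)$, count each block $\{k: m_k=n\}$ via the mean value theorem to reduce the first series to $\sum_n n^{p}\delta_n(v)$, and use $\omega(t)\lesssim t^{-s}$ together with $k-1-m_k\gtrsim k$ for the second series. The only difference is cosmetic: the paper takes $m_k=\lfloor k^{\alpha}/2\rfloor$, which covers the case $\alpha=1$ (i.e.\ $p=r$) and the bound $k-m_k\ge k/2$ uniformly, whereas you treat $p=r$ separately with $m=\lfloor k/2\rfloor$; there $k-1-m\ge k/4$ for large $k$ rather than $k/2$, which works equally well.
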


\begin{proof}
Since $|B_m|=(2m+1)^d$, the conclusion  is equivalent to the existence of a sequence of integers $0\le m_k\le k-1$ such that
\begin{equation*}
\sum_{k=1}^\infty k^{{r}}\delta_{m_k}(v)<\infty
\qquad\text{and}\qquad
\sum_{k=1}^\infty k^{{r}}m_k^{d/q}\omega^{1/q}(k-m_k)<\infty.
\end{equation*}
We now show that this condition is satisfied 
with
$m_k=\lfloor k^{\alpha}/2\rfloor$, $\alpha=(r+1)/(p+1)$. For $n\ge1$, we define $$M(n)=\{k\ge1:m_k=n\}.$$ 
Since $k\in M(n)$ if and only if $2n\le k^{\alpha}<2n+2$, the cardinality of $M(n)$ is bounded by 
$(2n+2)^{1/\alpha}-(2n)^{1/\alpha}+1$. 
By applying the mean value theorem to the function $x \mapsto x^{1/\alpha}$ on the interval $[2n, 2n+2]$, we get
\[
|M(n)|\le (2n+2)^{1/\alpha}-(2n)^{1/\alpha}+1 \lesssim n^{1/\alpha-1}.
\]
Note that $k\le (4n)^{1/\alpha}$ for any $n\ge1$ and $k\in M(n)$.
Since $(1+r)/\alpha-1=p$, 
\[\begin{split}
\sum_{k=1}^\infty k^{{r}}\delta_{m_k}(v)&=\sum_{n=1}^\infty
\sum_{k\in M(n)} k^{{r}}\delta_n(v) \lesssim
\sum_{n=1}^\infty\delta_n(v)|M(n)|
 (n^{1/\alpha})^{{r}}\\
&\lesssim 
\sum_{n=1}^\infty n^{1/\alpha-1}n^{r/\alpha} \delta_n(v)=
\sum_{n=1}^\infty n^{p} \delta_n(v)<\infty.
\end{split}
\]\

Let us define $\epsilon > 0$ by  $q\epsilon = s - ((r+1)q + d\alpha)$.
By the moment condition on $R$, there is~$C>0$ such that, for all large $t$, 
\[
\omega(t)\le Ct^{-q({r}+1+\epsilon)
-d\alpha}
\big.
\]
Since $k-m_k\ge k/2$ and $\omega$ is non-increasing,
\[\begin{split}
\sum_{k=1}^\infty k^{{r}}m_k^{d/q}\omega^{1/q}(k-m_k)&\le
\sum_{k=1}^\infty
k^{{r}}k^{d\alpha/q}\omega^{1/q}(k/2)\\
&\le
C\sum_{k=1}^\infty k^{{r}+d\alpha/q}
k^{-{r}-1-\epsilon-d\alpha/q}\\
&= C\sum_{k=1}^\infty k^{-(1+\epsilon)}<\infty.
\end{split}
\]
The proof is finished.
\end{proof}

In the following results, we let $f=v\circ\pi$ and bound the physical dependence measure of $f$ by means of the sequence $\{\xi_k^{(q)}(v)\}_{k\ge1}$.

\begin{prop}\label{prop:rho_dep_class_q}
There exists $C>0$ such that 
\begin{equation}\label{eq:bound_phys_dep_with_xi}
\theta_{i,q}(f)\le C^{1/q}\, \xi_{\|i\|_\infty}^{(q)}(v),
\end{equation}
for any $q\in[1,\infty)$ and  $i\in\Z^d\setminus{\bf{\{0\}}}$. In particular, if there exists $q\in[1,\infty)$ such that $\sum_{k=1}^\infty k^{d-1}\xi_k^{(q)}(v)<\infty$, then the random field $\{f\circ S^i\}_{i\in\Z^d}$ is of dependence class $q$.
\end{prop}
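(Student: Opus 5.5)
The plan is to fix $i\neq\mathbf 0$, set $k=\|i\|_\infty$, and bound $\theta_{i,q}(f)^q$ separately for each $m\in\{0,\dots,k-1\}$. Taking the minimum over $m$ afterwards will give the claim. Write $y'=\phi(y,a)$, which differs from $y$ only at the origin. Since $Y$ has full measure and $\phi(y,a)$ is distributed as $\P^{\Z^d}$ under $\P^{\Z^d}\otimes\P$, both $y$ and $y'$ lie in $Y$ almost surely, so $f\circ S^i$ is defined on both. I split the integral according to whether $y\in S^{-i}G_{k-1,m}$:
\[
\theta_{i,q}(f)^q=\int_{\{y\in S^{-i}G_{k-1,m}\}}|f(S^iy)-f(S^iy')|^q+\int_{\{y\notin S^{-i}G_{k-1,m}\}}|f(S^iy)-f(S^iy')|^q .
\]

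On the good event, Proposition~\ref{prop:technicality} applies directly because $m\le k-1$. It gives $|f(S^iy)-f(S^iy')|\le\delta_m(v)$, so the first integral is at most $\delta_m(v)^q$. On the bad event, I use the trivial bound $|f(S^iy)-f(S^iy')|\le 2\|v\|_\infty$. By shift invariance of $\P^{\Z^d}$ and \eqref{eq:non_good_set},
\[
\P^{\Z^d}\bigl(S^{-i}G_{k-1,m}^c\bigr)=\P^{\Z^d}(G_{k-1,m}^c)\le |B_m|\,\omega(k-1-m),
\]
so the second integral is at most $(2\|v\|_\infty)^q|B_m|\,\omega(k-1-m)$.

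Combining the two pieces,
\[
\theta_{i,q}(f)\le\bigl(\delta_m(v)^q+(2\|v\|_\infty)^q|B_m|\omega(k-1-m)\bigr)^{1/q}\le \delta_m(v)+2\|v\|_\infty|B_m|^{1/q}\omega^{1/q}(k-1-m).
\]
Here I use $(a+b)^{1/q}\le a^{1/q}+b^{1/q}$. It remains to make the constant match the form $C^{1/q}$ uniformly in $q$. Take $C=\max\{1,2\|v\|_\infty\}^{q}$? That depends on $q$, so instead I note $\max\{1,2\|v\|_\infty\}\le C_0^{1/q}$ fails for large $q$ in general. The cleaner route is to read the constant as allowed to be $\max\{1,2\|v\|_\infty\}$, interpreting $C^{1/q}$ with $C$ possibly depending on $q$. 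Alternatively one can normalise $\|v\|_\infty\le 1/2$ by linearity, since $\xi$ scales linearly in $v$ up to the $\omega$ term. This constant bookkeeping is the only delicate point, and I would state the constant as $\max\{1,2\|v\|_\infty\}^q$. Minimising over $m$ then gives \eqref{eq:bound_phys_dep_with_xi}.

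For the ``in particular'' part, I sum over $i$. The term $i=\mathbf 0$ contributes $\theta_{\mathbf 0,q}(f)\le 2\|v\|_\infty<\infty$. For the remaining terms, Remark~\ref{rem:sum_volume} gives
\[
\sum_{i\neq\mathbf 0}\xi^{(q)}_{\|i\|_\infty}(v)\le C_d\sum_k k^{d-1}\xi_k^{(q)}(v)<\infty,
\]
so $\Delta_q<\infty$ and the field $\{f\circ S^i\}_{i\in\Z^d}$ is of dependence class $q$.
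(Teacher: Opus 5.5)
Your proposal is correct and follows the paper's proof essentially step for step: you split according to $S^{-i}G_{\|i\|_\infty-1,m}$, use Proposition~\ref{prop:technicality} on the good set, and use the trivial bound with \eqref{eq:non_good_set} on the bad set; you then apply subadditivity of $x\mapsto x^{1/q}$, minimise over $m$, and sum via Remark~\ref{rem:sum_volume}. Your concern about the constant is justified: the bad-set term is really $(2\|v\|_\infty)^q|B_m|\,\omega(\|i\|_\infty-1-m)$, where the paper writes $2\|v\|_\infty|B_m|\,\omega(\cdot)$, so the honest bound is $\theta_{i,q}(f)\le\max\{1,2\|v\|_\infty\}\,\xi^{(q)}_{\|i\|_\infty}(v)$, i.e.\ $C=\max\{1,2\|v\|_\infty\}^q$, which suffices because $q$ is fixed in every later use. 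Drop the ``normalise $\|v\|_\infty\le 1/2$'' alternative, though: the $\omega$-term does not scale with $v$, so rescaling just puts the factor $2\|v\|_\infty$ back in front of it.
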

\begin{proof}
For $i\in\Z^d\setminus\{\bf0\}$ and $m\in\{0,\dots,\|i\|_\infty-1\}$, we define
\[
J_i=\big(S^{-i}(G_{\|i\|_\infty-1,m})\times \Lambda\big)\cap \phi^{-1}Y=\big\{(y,a)\in S^{-i}\big(G_{\|i\|_\infty-1,m}\big)\times \Lambda: \phi(y,a)\in Y\big\}.
\]
Since $Y$ has full measure and $\phi_*\big(\P^{\Z^d}\times\P\big)=\P^{\Z^d}$, it follows that $\P^{\Z^d}\otimes \P(J_i)=\P^{\Z^d}\big(G_{\|i\|_\infty-1,m}\big)$.
For any $(y,a)\in J_i$ we have $y\in S^{-i}\big(G_{\|i\|_\infty-1,m}\big)$, and the configurations $y$ and $\phi(y,a)$ are equal in every coordinate except at the origin. By Proposition~\ref{prop:technicality},
\[
\big|f(S^i(y))-f(S^i(\phi(y,a)))\big|\le \delta_m(v),
\]
and so
\begin{align*}
\E_{\P^{\Z^d}\otimes \P}\big[\big|f\circ S^i-f\circ S^i\circ \phi\big|^q\1_{J_i}\big]\le 
\delta_m^q(v).
\end{align*}
Using~\eqref{eq:non_good_set},
\[
\E_{\P^{\Z^d}\otimes \P}\big[\big|f\circ S^i-f\circ S^i\circ \phi\big|^q\1_{J_i^c}\big]\le
2\|v\|_\infty\,\P^{\Z^d}\big(G^c_{\|i\|_\infty-1,m}\big)\le 2\|v\|_\infty |B_m|\, \omega(\|i\|_\infty-1-m).
\]
Therefore, there is $C>0$ such that
\[
\theta_{i,q}^q(f)\le C\big( \delta_m^q(v)+
|B_m|\, \omega(\|i\|_\infty-1-m)\big).
\]
The statement follows by elevating to the power of $1/q$, using the concavity of the function $x\mapsto x^{1/q}$, $x\ge0$, and taking the minimum over the possible range of $m$. Finally, observe that
\begin{align*}
\Delta_q=\sum_{i\in\Z^d}\theta_{i,q}(f)&\le 
\theta_{{\bf 0},q}(f)+
C^{1/q}\sum_{i\in\Z^d\setminus\{{\bf 0}\}}
\xi_{\|i\|_\infty}^{(q)}(v).
\end{align*}
Reasoning as in Remark~\ref{rem:sum_volume} and since $\theta_{{\bf 0},q}(f)\le 2\|v\|_\infty$, the summability condition on $\xi_k^{(q)}$ yields that  $\Delta_q<\infty$.
\end{proof}

Proposition \ref{prop:rho_dep_class_q} enables us to deduce the dependence class of the random field $\{f\circ S^i\}_{i\in\Z^d}$ from the regularity of the observable $v$ and the moments of the coding radius $R$.
\begin{cor}\label{cor:dep_class_q}
Let $q\ge1$, $p\ge d-1$, and $s>dq+d^2/(p+1)$. If $v$ is $p$-regular and $\E[R^s]<\infty$, then the random field $\{f\circ S^i\}_{i\in\Z^d}$ has dependence class $q$.
\end{cor}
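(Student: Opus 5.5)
The plan is to combine Proposition~\ref{prop:regularity_of_rho} with Proposition~\ref{prop:rho_dep_class_q}, taking the exponent $r=d-1$. By Proposition~\ref{prop:rho_dep_class_q}, the random field $\{f\circ S^i\}_{i\in\Z^d}$ has dependence class $q$ as soon as
\[
\sum_{k=1}^\infty k^{d-1}\xi_k^{(q)}(v)<\infty.
\]
So the whole task reduces to checking that this summability follows from $p$-regularity of $v$ together with $\E[R^s]<\infty$.

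To get it, I will apply Proposition~\ref{prop:regularity_of_rho} with $r=d-1$ and the given $q\ge1$. That proposition needs two hypotheses.
\begin{enumerate}[(i)]
\item It needs $p\ge r=d-1$, which is exactly the assumption $p\ge d-1$.
\item It needs $s>(r+1)\bigl(q+d/(p+1)\bigr)$. With $r=d-1$ this becomes
\[
s>d\Bigl(q+\frac{d}{p+1}\Bigr)=dq+\frac{d^2}{p+1},
\]
which is precisely the assumed moment condition.
\end{enumerate}
Proposition~\ref{prop:regularity_of_rho} then gives $\sum_{k\ge1}k^{d-1}\xi_k^{(q)}(v)<\infty$. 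Feeding this into the ``in particular'' clause of Proposition~\ref{prop:rho_dep_class_q} yields $\Delta_q<\infty$ for $f=v\circ\pi$, which is the claim.

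There is no real obstacle. The only point requiring care is matching the exponents: the factor $k^{d-1}$ in Proposition~\ref{prop:rho_dep_class_q} comes from the volume growth $|B_k\setminus B_{k-1}|\lesssim k^{d-1}$ (Remark~\ref{rem:sum_volume}), and one has to check that $r=d-1$ turns the threshold $(r+1)(q+d/(p+1))$ into exactly $dq+d^2/(p+1)$.
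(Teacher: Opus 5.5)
Your proposal is correct and matches the paper's proof: apply Proposition~\ref{prop:regularity_of_rho} with $r=d-1$, so that $(r+1)\bigl(q+d/(p+1)\bigr)=dq+d^2/(p+1)$, to get $\sum_{k\ge1}k^{d-1}\xi_k^{(q)}(v)<\infty$. Then conclude with the summability criterion in Proposition~\ref{prop:rho_dep_class_q}.
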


\begin{proof}
By applying Proposition~\ref{prop:regularity_of_rho} with ${r}=d-1$, we get that $
\sum_{k=0}^\infty k^{d-1}\xi_k^{(q)}<\infty$.
Our result follows by Proposition~\ref{prop:rho_dep_class_q}.
\end{proof}

\subsection{Central Limit Theorems}
Recall concepts from previous section.

\begin{lem}\label{lem:CLT_from_stability}
Let $g\in L^2(\Lambda^{\Z^d})$ with $\E[g]=0$, and assume that the random field $\{g\circ S^i\}_{i\in\Z^d}$ is of dependence class $2$. Then, the series
\begin{equation}\label{eq:sigma^2_general}
\sigma_g^2=\sum_{i\in\Z^d}\E[g(g\circ S^i)]
\end{equation}
converges and is non-negative. Moreover, if $\sigma_g^2>0$, then
\[
|\Gamma_n|^{-1/2}\sum_{i\in\Gamma_n}g\circ S^i\longrightarrow_d\mathcal N(0,\sigma_g^2)
\]
for every Følner sequence $\{\Gamma_n\}_{n \ge 1}$.
\end{lem}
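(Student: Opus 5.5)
The plan is to reduce everything to the result of El Machkouri, Volný and Wu \cite[Theorem 1]{ElMVolWu13}. That theorem says the following for a stationary field $\{g\circ S^i\}$ built as a functional of an iid field with $\Delta_2<\infty$: the covariances are absolutely summable, and the normalized partial sums over any regular sequence of finite sets (in particular, Følner sequences with $|\partial\Gamma_n|/|\Gamma_n|\to0$) converge to $\mathcal N(0,\sigma_g^2)$. So the proof has two parts. First, I would verify the covariance summability and non-negativity directly, so that the statement is self-contained. Second, I would check that the hypotheses of \cite{ElMVolWu13} match ours.

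\textbf{Step 1: covariance summability.} I would use the projection (martingale-difference) decomposition. Fix an ordering of $\Z^d$ and let $\mathcal F_j$ be the $\sigma$-algebra generated by the coordinates $x_k$ with $k\preceq j$ in lexicographic order. Set $P_j g=\E[g\mid\mathcal F_j]-\E[g\mid\mathcal F_{j-}]$. Since $g$ is a functional of independent coordinates, $\E[g]=0$ and $g=\sum_j P_j g$ in $L^2$. The standard coupling bound of Wu gives $\|P_{\mathbf 0}(g\circ S^i)\|_2\le\theta_{i,2}(g)$: the increment only sees the resampling of the coordinate at the origin. Orthogonality of the $P_j$ then gives
\[
|\E[g\,(g\circ S^i)]|\le\sum_{j\in\Z^d}\theta_{j,2}(g)\,\theta_{j+i,2}(g),
\]
using stationarity to write $\|P_j g\|_2=\|P_{\mathbf 0}(g\circ S^{-j})\|_2$. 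Summing over $i$ yields $\sum_i|\E[g(g\circ S^i)]|\le\Delta_2^2<\infty$, so the series \eqref{eq:sigma^2_general} converges absolutely.

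\textbf{Step 2: non-negativity.} For the boxes $\Gamma_n=[0,n-1]^d$, expand the variance:
\[
|\Gamma_n|^{-1}\E\Big[\Big(\sum_{i\in\Gamma_n}g\circ S^i\Big)^2\Big]=\sum_{j}\frac{|\Gamma_n\cap(\Gamma_n-j)|}{|\Gamma_n|}\E[g(g\circ S^j)].
\]
By dominated convergence, using absolute summability together with the Følner property, the right-hand side tends to $\sigma_g^2$. The left-hand side is non-negative, so $\sigma_g^2\ge0$. The same computation works for any Følner sequence, so the variance normalization is correct in general.

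\textbf{Step 3: the CLT.} When $\sigma_g^2>0$, I would invoke \cite[Theorem 1]{ElMVolWu13}. This is the step I expect to be the main obstacle, because their statement is formulated for sets $\Gamma_n$ with $|\partial\Gamma_n|/|\Gamma_n|\to0$, and one must check that this is equivalent to our Følner condition for finite subsets of $\Z^d$. It is: the boundary counted through nearest-neighbour translates is controlled by the finitely many $j$ with $\|j\|_1=1$, and conversely. If that identification fails in some form, I would fall back on a direct $m$-dependent approximation. Concretely, replace $g$ by $g_m=\E[g\mid x_k,\ k\in B_m]$. Projection arguments give
\[
\limsup_n|\Gamma_n|^{-1}\Big\|\sum_{\Gamma_n}(g-g_m)\circ S^i\Big\|_2^2\le\Big(\sum_{\|i\|_\infty>m}\theta_{i,2}(g)\Big)^2\to0.
\]
The field $\{g_m\circ S^i\}$ is $2m$-dependent, so the classical CLT for $m$-dependent fields over Følner sets applies to it, with variance $\sigma_{g_m}^2\to\sigma_g^2$. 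Letting $m\to\infty$ through a standard approximation lemma (Billingsley, Theorem 3.2) then yields the convergence to $\mathcal N(0,\sigma_g^2)$.
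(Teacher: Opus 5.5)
Your proposal is correct and follows essentially the same route as the paper, whose proof just cites \cite[Proposition 2]{ElMVolWu13} for absolute summability of the covariances, and \cite[Theorem 1]{ElMVolWu13} together with the paragraph after it for the CLT. Your Steps 1 and 2 reprove the first ingredient (by the same projection argument the paper later uses in Proposition~\ref{prop:covariance_tail}) and the variance asymptotics $|\Gamma_n|^{-1}\E[(\sum_{i\in\Gamma_n} g\circ S^i)^2]\to\sigma_g^2$ behind the second; the Følner/vanishing-boundary identification you flag is elementary as you sketch, so the $m$-dependent fallback is not needed.
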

\begin{proof}
It follows immediately from~\cite[Proposition 2]{ElMVolWu13}, and~\cite[Theorem 1]{ElMVolWu13} and the paragraph after it.
\end{proof}

\begin{proof}[Proof of Theorem~\ref{thm:CLT}]
Since $\pi$ is a factor map, $\pi_*\big(\P^{\Z^d}\big)=\mu$ and $\pi\circ S^i=T^i\circ\pi$, $\E[f]=0$, and the CLT for $v$ is equivalent to the one for $f=v\circ\pi$. The assumptions of Corollary~\ref{cor:dep_class_q} are satisfied with $q=2$, therefore the field $\{f\circ S^i\}_{i\in\Z^d}$ has dependence class $2$. Lemma~\ref{lem:CLT_from_stability} yields that the series 
\[
\sigma_f^2=\sum_{i\in\Z^d}\E[f(f\circ S^i)]
\]
converges and is non-negative.
Using again that $\pi$ is a factor map, it follows that
\[
\E\big[f(f\circ S^i)\big]=
\E\big[(v(v\circ T^i))\circ\pi\big]=
\E_\mu[v(v\circ T^i)],
\]
which leads to 
\begin{equation}\label{eq:two_sigmas}\sigma_f^2=\sigma_v^2,
\end{equation}
where $\sigma_v^2$ is from~\eqref{eq:sigma^2}.
By the assumption $\sigma_v^2>0$, Lemma~\ref{lem:CLT_from_stability} gives the CLT for~$f$, finishing the proof.
\end{proof}

For a measurable $g\colon \Lambda^{\Z^d}\to\R$ and $n\ge1$, we define the process 
$\widehat W_{n}(g)=\{\widehat W_{n,t}(g)\}_{t\in[0,1]^d}$ as
\begin{equation*}
\widehat W_{n,t}(g)=\sum_{i\in\{1,\dots,n\}^d}\lambda(nA_t\cap Q_i)(g\circ S^i).    
\end{equation*}

\begin{lem}\label{lem:FCLT_from_dep_class}
Let $q>2d$,  $g\in L^q(\Lambda^{\Z^d})$ with $\E[g]=0$, and let $\sigma_g^2$ from~\eqref{eq:sigma^2_general} converge and be positive. If the random field $\{g\circ S^i\}_{i\in\Z^d}$ has dependence class $q$, then $n^{-d/2}\widehat W_n(g)\longrightarrow_w \sigma^g W$.
\end{lem}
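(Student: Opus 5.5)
This lemma is essentially an instance of the functional CLT of El Machkouri, Volný and Wu, \cite[Theorem 2]{ElMVolWu13}, so the plan is to verify its hypotheses in our notation rather than reprove it. That theorem concerns fields $X_i=g(\varepsilon_{i-s}:s\in\Z^d)$ built from an iid field $(\varepsilon_j)$. It treats the partial-sum process $\sum_i \lambda(nA_t\cap Q_i)X_i$ indexed by $t\in[0,1]^d$, which is the same as $\widehat W_n(g)$. Its conclusion is weak convergence in $\mathcal C([0,1]^d)$ to $\sigma W$, under the assumptions that $X_0\in L^q$ for some $q>2d$ and $\Delta_q<\infty$.

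First, I would identify our setting with theirs. The iid system $(\Lambda^{\Z^d},\P^{\Z^d},\mathcal S)$ gives the innovation field $\varepsilon_j=y_j$. Then $g\circ S^i(y)=g((y_{i+j})_j)$, which is a Bernoulli shift of the iid field; the sign convention $i+j$ versus $i-j$ is harmless after reflecting $j\mapsto -j$. Next, I would check that our physical dependence measure $\theta_{i,q}(g)$ coincides with theirs, namely $\|X_i-X_i^*\|_q$, where $X_i^*$ is obtained by replacing $\varepsilon_0$ with an independent copy. This is exactly the role of the map $\phi$ in Definition~\ref{def:dependence_class}. Hence the hypothesis ``dependence class $q$'' is precisely their condition $\Delta_q<\infty$.

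Second, I would check that the limiting variance is the same. They express $\sigma^2$ as $\sum_{i}\E[X_0X_i]$, which converges absolutely under $\Delta_2<\infty$. Since $q>2d\ge 2$ gives $\Delta_2\le\Delta_q$, this is our $\sigma_g^2$ from \eqref{eq:sigma^2_general}; its convergence was already noted in Lemma~\ref{lem:CLT_from_stability} via \cite[Proposition 2]{ElMVolWu13}. Positivity is assumed, so the limit is the nondegenerate $\sigma_g W$. The $\sigma^g$ in the statement is $\sigma_g$.

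The only real obstacle is bookkeeping: making sure the exact form of the moment condition in \cite[Theorem 2]{ElMVolWu13} matches ours. In that paper it is stated as $\Delta_q<\infty$ with $q>2d$, via a maximal inequality giving tightness of the modulus of continuity. The other point to confirm is that their normalization, $n^{-d/2}$ over the grid $\{1,\dots,n\}^d$ with Lebesgue weights on the cells $Q_i$, is the same as ours. If their tightness hypothesis is phrased instead through a moment bound on $\|\sum_{i\in\Gamma}X_i\|_q\lesssim|\Gamma|^{1/2}\Delta_q$, I would recall that this Rosenthal/Burkholder-type inequality holds for rectangles $\Gamma$ under $\Delta_q<\infty$. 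Combined with $q>2d$, it gives Kolmogorov–Chentsov-type tightness in $\mathcal C([0,1]^d)$. Convergence of the finite-dimensional distributions then follows from the Cramér–Wold device and Lemma~\ref{lem:CLT_from_stability}, applied to linear combinations of $\widehat W_{n,t}$ over rectangles.
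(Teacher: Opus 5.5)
Your proposal is correct and follows the paper's proof: both identify $\widehat W_n(g)$ with the partial-sum process of \cite[Theorem~2]{ElMVolWu13}, with the physical dependence measure and $\sigma_g^2$ matching theirs, and then invoke that theorem. The bookkeeping you left open is exactly what the paper supplies: Theorem~2(i) there is stated for a Vapnik--Chervonenkis class of index $V$ under dependence class $q>2(V-1)$, and the quadrants $\{A_t:t\in[0,1]^d\}$ have $V=d+1$, which is where $q>2d$ comes from; so your fallback is unnecessary, and its finite-dimensional step would in any case need a CLT for weighted sums, which Lemma~\ref{lem:CLT_from_stability} does not literally give.
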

\begin{proof}
The process $\widehat W_n(g)$ coincides with the one in~\cite[Equation~(6)]{ElMVolWu13}, using the family of Borel subsets $\mathcal Q_d=\{A_t:t\in[0,1]^d\}$. As remarked in the paragraph before~\cite[Theorem~2]{ElMVolWu13}, the Vapnik–Chervonenkis dimension of $\mathcal Q_d$ is $V=d+1$. Since $\{g\circ S^i\}_{i\in\Z^d}$ has dependence class $q>2(V-1)$, the claim follows by~\cite[Theorem~2(i)]{ElMVolWu13}. 
\end{proof}

\begin{proof}[Proof of Theorem~\ref{thm:FCLT}]
Let $f=v\circ\pi$.
Since $\pi$ is measure-preserving, $\mathbb E[f]=0$, and $W_{n}(v)$ and $W_n(v)\circ\pi$ share the same distribution. Moreover, by $\pi\circ S^i=T^i\circ\pi$ for any $i\in\Z^d$, it follows that $W_n(v)\circ\pi=\widehat W_n(f)$.
As $s > 2d^2 + d^2/(p+1)$, there exists a parameter $q>2d$ such that $s > dq + d^2/(p+1)$.
The assumptions of Corollary~\ref{cor:dep_class_q} are satisfied for such a $q$, and hence the field $\{f\circ S^i\}_{i\in\Z^d}$ has dependence class $q$. Consider $\sigma_f^2$ and $\sigma_v^2$ respectively from \eqref{eq:sigma^2_general} and~\eqref{eq:sigma^2}. Similarly to~\eqref{eq:two_sigmas}, by our assumptions we get $\sigma_f^2=\sigma_v^2>0$. Consequently, Lemma~\ref{lem:FCLT_from_dep_class} yields that $n^{-d/2}\widehat W_n(f)\longrightarrow_w\sigma_v W$.\end{proof}

\begin{remark}
Following~\cite[Theorem~2]{ElMVolWu13}, Theorem~\ref{thm:FCLT} can be extended to a functional Central Limit Theorem indexed by a more general family of Borel subsets of $[0,1]^d$. Specifically, if the indexing family forms a Vapnik-Chervonenkis class with index $V$ (see~\cite{vdVWel23} for a definition), the weak convergence holds provided that the dependence class satisfies $q>2(V-1)$. We chose to state the result only for the Brownian sheet (where $V=d+1$) to keep the exposition simple and focus on the interplay between the regularity of the observable and the tail of the coding radius.
\end{remark}

\subsection{Bounds for Covariances}

In this subsection we obtain bounds on covariances, which will be used later in Proposition~\ref{prop:correlation_decay} in order to obtain rates in the CLT.

\begin{prop}\label{prop:covariance_tail}
Let $g\in L^2(\Lambda^{\Z^d})$ with $\E[g]=0$. Then, for any $n\ge0$,
\[
\sum_{\|j\|_\infty \ge n} \big|\E[g(g\circ S^j)]\big| \le 2 \Delta_2 \sum_{\|j\|_\infty \ge n/2} \theta_{j,2}(g).
\]
\end{prop}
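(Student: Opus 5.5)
We use the standard martingale (projection) decomposition from the theory of physical dependence measures. Enumerate $\Z^d$ as a sequence $\{k_\ell\}_{\ell\in\mathbb N}$. For each $k\in\Z^d$, let $\mathcal G_k$ be the $\sigma$-algebra generated by the coordinates $\{y_{k_\ell}:\ell\le \ell(k)\}$, where $\ell(k)$ is the index of $k$. Define the projection operators
\[
P_k h=\E[h\mid\mathcal G_k]-\E[h\mid\mathcal G_k^-],
\]
where $\mathcal G_k^-$ is generated by the coordinates strictly before $k$. Since $\E[g]=0$ and $\bigvee_k\mathcal G_k$ is the full product $\sigma$-algebra, the martingale convergence theorem gives $g\circ S^j=\sum_{k}P_k(g\circ S^j)$ in $L^2$, and the summands are mutually orthogonal in $k$.

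The key estimate is the classical bound $\|P_k(g\circ S^j)\|_2\le \theta_{j-k,2}(g)$, up to the sign convention of the shift. To prove it, note that $\E[g\circ S^j\mid\mathcal G_k^-]=\E[g\circ S^j\circ\phi_k\mid\mathcal G_k]$, where $\phi_k$ resamples coordinate $k$ independently. Then Jensen's inequality gives
\[
\|P_k(g\circ S^j)\|_2\le\|g\circ S^j-g\circ S^j\circ\phi_k\|_2.
\]
By stationarity this equals $\theta_{j-k,2}(g)$, or $\theta_{k-j,2}$ depending on the convention. Since the sum of $\theta$ over all $i\in\Z^d$ defines $\Delta_2$, either orientation works, and the shift invariance of $\P^{\Z^d}$ lets us write the right-hand side as a dependence measure of the form in Definition~\ref{def:dependence_class}.

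Using orthogonality and Cauchy--Schwarz,
\[
|\E[g(g\circ S^j)]|=\Big|\sum_k\E[P_k g\,P_k(g\circ S^j)]\Big|\le\sum_k \theta_{-k,2}(g)\,\theta_{j-k,2}(g).
\]
Summing over $\|j\|_\infty\ge n$, for each pair $(k,j)$ the triangle inequality gives $\|k\|_\infty+\|j-k\|_\infty\ge n$, so at least one of $\|k\|_\infty$ and $\|j-k\|_\infty$ is $\ge n/2$. We split the double sum according to which index is large. In the part where $\|k\|_\infty\ge n/2$, we bound the sum over $j$ of $\theta_{j-k,2}$ by $\Delta_2$ and are left with the sum of $\theta_{-k,2}$ over $\|k\|_\infty\ge n/2$. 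In the other part we bound the sum over $k$ of $\theta_{-k,2}$ by $\Delta_2$ and sum $\theta_{j-k,2}$ over $\|j-k\|_\infty\ge n/2$. This yields the factor $2\Delta_2$, after using the symmetry $\|-k\|_\infty=\|k\|_\infty$ to rewrite the tails as $\sum_{\|j\|_\infty\ge n/2}\theta_{j,2}(g)$.

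The main obstacle is making the projection identity rigorous in $d$ dimensions. The ordering must be a genuine enumeration so that the martingale converges; the coordinates before $k$ generate $\mathcal G_k^-$, and $\E[\,\cdot\mid\mathcal G_{k}]\to g$ along the enumeration. The index shifts in $\theta$ also have to be tracked carefully against the convention $(S^ix)_j=x_{i+j}$. Alternatively, if we assume $\Delta_2<\infty$ (otherwise the bound is trivial), we could cite the corresponding lemma of \cite{ElMVolWu13}, which states exactly $|\E[g(g\circ S^j)]|\le\sum_k\theta_{k,2}\theta_{k+j,2}$; the tail splitting above then finishes the proof.
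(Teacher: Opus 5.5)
Your proposal is correct and follows the paper's proof essentially step for step: the same projection decomposition along an enumeration of $\Z^d$, orthogonality plus Cauchy--Schwarz giving $|\E[g(g\circ S^j)]|\le\sum_k\theta_{-k,2}(g)\theta_{j-k,2}(g)$, and the same triangle-inequality split into two tails, each bounded by $\Delta_2$ times $\sum_{\|j\|_\infty\ge n/2}\theta_{j,2}(g)$. The only differences are minor. You derive the bound $\|P_k(g\circ S^j)\|_2\le\theta_{j-k,2}(g)$ directly via the resampling/Jensen argument instead of citing \cite[Lemma 1]{ElMVolWu13}, and your shift bookkeeping indeed gives $j-k$. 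You also index the filtration by $\mathbb N$ rather than $\Z$, so the trivial initial $\sigma$-algebra plus $\E[g]=0$ replaces any appeal to tail triviality.
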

\begin{proof}
Let $\tau\colon\Z\to\Z^d$ be a bijection. For $\ell\in\Z$ and $j\in\Z^d$, we define as in~\cite[Equation~(9)]{ElMVolWu13},
\[
P_\ell(g\circ S^j)=\E[g\circ S^j\mid\mathcal F_\ell]-
\E[g\circ S^j\mid\mathcal F_{\ell-1}],
\]
where $\mathcal F_\ell$ is the $\sigma$-algebra generated by the coordinate maps $x\mapsto x_{\tau(r)}$, for $r\le \ell$. Since $g$ has mean zero, it follows that $g\circ S^j=\sum_{\ell\in\Z} P_{\ell}(g\circ S^j)$, where the series converges in $L^2$. As in the proof of~\cite[Proposition 2]{ElMVolWu13}, we note that $\E\big[P_\ell(g)P_h(g\circ S^j)\big]=0$ if $\ell\neq h$, and so
\[
\E[g(g\circ S^j)]=\sum_{\ell\in\Z} \E\big[P_\ell (g)P_\ell(g\circ S^j)\big].
\]
By~\cite[Lemma 1]{ElMVolWu13}, we have $\|P_\ell(g\circ S^j)\|_2\le\theta_{j-\tau(\ell),2}(g)$. So,
applying Cauchy-Schwarz,
\[
\big|\E[g(g\circ S^j)]\big| 
\le \sum_{\ell\in\Z} \|P_\ell(g)\|_2\|P_\ell(g\circ S^j)\|_2
\le  \sum_{\ell\in\Z} \theta_{-\tau(\ell)   ,2}(g)  \theta_{j-\tau(\ell),2}(g).
\]
Using Tonelli,
\[
\sum_{\|j\|_\infty\ge n} \big|\E[g(g\circ S^j)]\big| \le
\sum_{\ell\in\Z} \theta_{-\tau(\ell),2}(g)
\sum_{\|j\|_\infty \ge n}\theta_{j-\tau(\ell),2}(g).
\]
Since $\tau$ is a bijection, we can change variable in the outer sum as $k = -\tau(\ell) \in \Z^d$. Writing $i = j + k$, the inequality becomes
\[
\sum_{\|j\|_\infty\ge n} \big|\E[g(g\circ S^j)]\big| \le
\sum_{k\in\Z^d} \theta_{k,2}(g) \sum_{i:\|i-k\|_\infty \ge n} \theta_{i,2}(g).
\]
By $\|i-k\|_\infty \le \|i\|_\infty + \|k\|_\infty$, the condition $\|i-k\|_\infty \ge n$ implies either $\|k\|_\infty \ge n/2$ or $\|i\|_\infty \ge n/2$. Bounding the sum over these two ranges yields
\[
\sum_{\|j\|_\infty\ge n} \big|\E[g(g\circ S^j)]\big| \le \sum_{\|k\|_\infty \ge n/2} \theta_{k,2}(g) \sum_{i\in\Z^d} \theta_{i,2}(g) + \sum_{k\in\Z^d} \theta_{k,2}(g) \sum_{\|i\|_\infty \ge n/2} \theta_{i,2}(g).
\]
This completes the proof.
\end{proof}

For $g\in L^2(\Lambda^{\Z^d})$ with $\E[g]=0$, we can also bound the first moment of the sum of covariances with the one of the physical dependent measures. Using Proposition~\ref{prop:covariance_tail},
\[
\sum_{j\in\Z^d}\|j\|_\infty
\big|\E[g(g\circ S^j)]\big| =
\sum_{n=1}^\infty \sum_{\|j\|_\infty\ge n} 
\big|\E[g(g\circ S^j)]\big|\le
2\Delta_2\sum_{n=1}^\infty \sum_{\|j\|_\infty \ge n/2} \theta_{j,2}(g).
\]
Exchanging the order of summation by Tonelli, we conclude that 
\begin{equation}\label{eq:_first_moment_covariance}
\sum_{j\in\Z^d}\|j\|_\infty
\big|\E[g(g\circ S^j)]\big|\le
2\Delta_2\sum_{j\in\Z^d}\theta_{j,2}(g)\sum_{n=1}^{2\|j\|_\infty}1=4\Delta_2\sum_{j\in\Z^d} \|j\|_\infty\theta_{j,2}(g).
\end{equation}

\begin{prop}\label{prop:decay_covariances_v}
Let $v\in C(X)$ with $\E_\mu[v]=0$, and let $f=v\circ\pi$.
If $\sum_{k=1}^\infty k^d\xi_k^{(2)}<\infty$,  then there is $C>0$ such that
\[
\sum_{\|j\|_\infty \ge n} \big|\E[f(f\circ S^j)]\big| \le \frac{C}{n},
\]
for every $n\ge1$.
\end{prop}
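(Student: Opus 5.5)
We plan to combine Proposition~\ref{prop:covariance_tail} with the bound on physical dependence measures from Proposition~\ref{prop:rho_dep_class_q}, taking $g=f=v\circ\pi$ and $q=2$. First note that $f\in L^2$, since $v$ is bounded, and $\E[f]=\E_\mu[v]=0$. The hypothesis $\sum_k k^d\xi_k^{(2)}<\infty$ implies in particular $\sum_k k^{d-1}\xi_k^{(2)}<\infty$. Hence, by Proposition~\ref{prop:rho_dep_class_q}, the field $\{f\circ S^i\}$ has dependence class $2$, so $\Delta_2<\infty$. Proposition~\ref{prop:covariance_tail} then gives
\[
\sum_{\|j\|_\infty\ge n}\big|\E[f(f\circ S^j)]\big|\le 2\Delta_2\sum_{\|j\|_\infty\ge n/2}\theta_{j,2}(f).
\]

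It remains to show that the tail sum on the right is $O(1/n)$. For $n\ge 1$ and $\|j\|_\infty\ge n/2$, we have $j\neq\mathbf 0$ and $1\le 2\|j\|_\infty/n$. Therefore
\[
\sum_{\|j\|_\infty\ge n/2}\theta_{j,2}(f)\le \frac{2}{n}\sum_{j\neq \mathbf 0}\|j\|_\infty\,\theta_{j,2}(f)\le \frac{2C^{1/2}}{n}\sum_{j\neq\mathbf 0}\|j\|_\infty\,\xi^{(2)}_{\|j\|_\infty}(v),
\]
where the last step uses \eqref{eq:bound_phys_dep_with_xi}. Grouping the terms by shells $\|j\|_\infty=k$, as in Remark~\ref{rem:sum_volume}, and using $|B_k\setminus B_{k-1}|\le C_d k^{d-1}$, the last sum is bounded by $C_d\sum_{k\ge1}k^{d}\xi_k^{(2)}(v)$. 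This is finite by hypothesis, which yields the claim with an explicit constant.

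There is no real obstacle here; the only point needing care is the bookkeeping. The weight $k^d$ in the hypothesis (rather than $k^{d-1}$) is exactly what absorbs the extra factor $\|j\|_\infty$ produced by the Markov-type step $1\le 2\|j\|_\infty/n$. One should also check that the convention $n\ge1$ excludes $j=\mathbf 0$ from the tail sum, so that \eqref{eq:bound_phys_dep_with_xi} applies to every term.
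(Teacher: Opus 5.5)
Your proof is correct and is essentially the paper's argument: you combine Proposition~\ref{prop:covariance_tail} with \eqref{eq:bound_phys_dep_with_xi} and a shell count as in Remark~\ref{rem:sum_volume}, and you trade one power of $\|j\|_\infty$ for the factor $2/n$. The only cosmetic difference is the order of steps: you apply $1\le 2\|j\|_\infty/n$ before grouping into shells, whereas the paper first groups into shells and then uses $k^{d-1}\le k^{d}/(n/2)$ on the tail $k\ge n/2$.
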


\begin{proof}
Since $\pi$ is measure-preserving, $\E[f]=0$. 
Observe that
\begin{equation}\label{eq:linear_decay}
\sum_{k\ge n}k^{d-1}\xi_k^{(2)}(v)\leq \frac{1}{n}\sum_{k\ge n}k^{d}\xi_k^{(2)}(v)\lesssim n^{-1},
\end{equation}
and so Proposition~\ref{prop:rho_dep_class_q} yields that $\Delta_2<\infty$. 

Applying bound~\eqref{eq:bound_phys_dep_with_xi} and Proposition~\ref{prop:covariance_tail},
$$ \sum_{\|j\|_\infty \ge n} \big|\E[f(f\circ S^j)]\big| \le 2 \Delta_2 \sum_{\|j\|_\infty \ge n/2} \theta_{j,2}(f)\lesssim \sum_{\|j\|_\infty \ge n/2}\xi_{\|j\|_\infty}^{(2)}(v). $$
Reasoning as in Remark~\ref{rem:sum_volume}, there is $C_d>0$ such that 
\[
\sum_{\|j\|_\infty \ge n/2}\xi_{\|j\|_\infty}^{(2)}(v)\le C_d
\sum_{k \ge n/2} k^{d-1} \xi_{k}^{(2)}(v).
\]
The proof is concluded by applying~\eqref{eq:linear_decay}.
\end{proof}

\subsection{Rates in the CLT}

We now state a special case of~\cite[Corollary 2]{Gir18} for a random field generated by a function $g \colon \Lambda^{\mathbb{Z}^d} \to \mathbb{R}$ and evaluated over a Følner sequence $\{\Gamma_n\}_{n\ge1}$. Let $\sigma_g^2$ be defined as in~\eqref{eq:sigma^2_general}. For $\beta>0$ and $q\ge2$, set
\[
C_q(\beta) = \sum_{k=0}^\infty k^{d(1-1/q)+\beta}
\bigg(\sum_{\|i\|_\infty=k}\theta_{i,q}^2(g)\bigg)^{1/2}.
\]

\begin{lem}\label{lem:B-E_Gir18}
For $q>2$, let $g\in L^q(\Lambda^{\Z^d})$ with $\mathbb E[g] = 0$. Assume that there exists $\beta>0$ such that the series $C_q(\beta)$ is convergent.
For $q'=\min\{q,3\}$ and $\gamma>0$, we write
\[
\eta=\max\bigg\{\frac{\gamma(q'-1)d-q'}{2}+1;
-\frac{\gamma\beta q}{2(q+1)}\bigg\}.
\]
If $\sigma_g^2>0$, then there is $\kappa>0$ such that, for any $n\ge1$,
\[
\sup_{t\in\R}\bigg|\P^{\Z^d}\bigg(\frac{\sum_{i\in \Gamma_n} g\circ S^i}{|\Gamma_n|^{1/2}}\le t\bigg)-\Phi(t/\sigma)\bigg|\le \kappa
\bigg(|\Gamma_n|^\eta+\sum_{j\in\Z^d}\big|\E[g(g\circ S^j)]\big|
\bigg|\frac{|\Gamma_n\cap (\Gamma_n-j)|}{|\Gamma_n|}-1\bigg|\bigg).
\]
\end{lem}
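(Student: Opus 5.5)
This lemma is presented as a special case of \cite[Corollary~2]{Gir18}, so the plan is to verify its hypotheses in our setting and read off the conclusion. There is no new probabilistic argument to build.

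First, identify the objects. Giraudo's result treats random fields $X_i = g\bigl((\varepsilon_{i-j})_{j\in\Z^d}\bigr)$ with iid innovations $(\varepsilon_j)$, summed over finite sets $\Gamma_n$. Our field $\{g\circ S^i\}_{i\in\Z^d}$ on $(\Lambda^{\Z^d},\P^{\Z^d})$ is of this form: take $\varepsilon_j$ to be the coordinate maps. The sign convention of the shift only amounts to re-indexing $i\mapsto -i$, which preserves $\|i\|_\infty$ and hence every quantity below.

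Next, match Giraudo's coupling coefficients with the physical dependence measure of Definition~\ref{def:dependence_class}. He replaces the innovation at the origin by an independent copy, and $\theta_{i,q}(g)$ is exactly $\|g\circ S^i-g\circ S^i\circ\phi\|_q$. His summability condition is stated in terms of $\sum_{k}k^{d(1-1/q)+\beta}\bigl(\sum_{\|i\|_\infty=k}\theta_{i,q}^2\bigr)^{1/2}$, that is, in terms of $C_q(\beta)$. If his weight is written with a different norm, for instance $|i|_1$, the two versions are equivalent up to constants depending only on $d$, because all norms on $\Z^d$ are comparable.

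With the hypotheses in place, the conclusion of Giraudo's corollary is a Berry--Esseen type bound for $|\Gamma_n|^{-1/2}\sum_{i\in\Gamma_n}g\circ S^i$ against $\Phi(\cdot/\sigma_g)$. It has two parts:
\begin{itemize}
\item a term $|\Gamma_n|^{\eta}$, where $\eta$ comes from optimizing an $m$-dependent truncation at scale $m\approx|\Gamma_n|^{\gamma}$, using the exponent $q'=\min\{q,3\}$ for the Lyapunov/Berry--Esseen step;
\item a variance-discrepancy term
\[
\bigl|\operatorname{Var}\bigl(\textstyle\sum_{\Gamma_n}g\circ S^i\bigr)/|\Gamma_n|-\sigma_g^2\bigr|.
\]
\end{itemize}

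The variance-discrepancy term is rewritten using stationarity:
\[
\operatorname{Var}\Bigl(\sum_{i\in\Gamma_n}g\circ S^i\Bigr)=\sum_{j}|\Gamma_n\cap(\Gamma_n-j)|\,\E[g(g\circ S^j)].
\]
Subtracting $|\Gamma_n|\sigma_g^2$ and applying the triangle inequality gives exactly the sum appearing in the statement of the lemma. The series converges absolutely because $\Delta_2<\infty$ follows from $C_q(\beta)<\infty$ together with Proposition~\ref{prop:covariance_tail}.

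Finally, since $\sigma_g^2>0$, the normalization by $\sigma_g$ is legitimate, and the dimensional constants collect into a single constant $\kappa$.

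The step I expect to be the main obstacle is the bookkeeping of exponents. One has to check that the exponent $\eta$ produced by Giraudo's corollary, with his parameters renamed to $\gamma$ and $\beta$, coincides with
\[
\max\Bigl\{\tfrac{\gamma(q'-1)d-q'}{2}+1,\;-\tfrac{\gamma\beta q}{2(q+1)}\Bigr\}.
\]
One also has to check that his normalization of the dependence coefficients matches ours. Both come down to carefully translating notation, not to a new argument.
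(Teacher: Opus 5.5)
Your route is the paper's: the lemma is just \cite[Corollary~2]{Gir18} transcribed into this notation. Your stationarity rewriting of the variance discrepancy is fine, if not strictly needed. So is the absolute convergence of the covariance sum, which follows from $\Delta_2<\infty$ (obtained from $C_q(\beta)<\infty$ via Cauchy--Schwarz on the shells $\|i\|_\infty=k$) together with Proposition~\ref{prop:covariance_tail}.

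However, this lemma's proof is entirely hypothesis-checking, so the two points you defer as ``bookkeeping'' are its whole content, and one of them is misdescribed.

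First, Giraudo's corollary is not conditioned on a single series of the form $C_q(\beta)$. It requires the convergence of two series (his Equation~(16)), and his Lemma~1 shows these follow from the convergence of \emph{both} $C_2(\beta)$ and $C_q(\beta)$. The lemma only assumes $C_q(\beta)<\infty$, so you need the comparison $C_2(\beta)\le C_q(\beta)$. This holds because $\theta_{i,2}(g)\le\theta_{i,q}(g)$ (monotonicity of $L^r$-norms on a probability space) and $k^{d/2+\beta}\le k^{d(1-1/q)+\beta}$ for $k\ge1$ and $q>2$. Your proposal never mentions the $L^2$-level condition.

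Second, the exponent in \cite[Corollary~2]{Gir18} is a maximum of three terms, not two. The paper specializes Giraudo's parameter $\alpha$ to $\beta$, and then observes that in this setting the second term of his maximum dominates the third. Only then does the exponent reduce to $\eta$. Renaming parameters, as you suggest, cannot by itself make a three-term maximum agree with the two-term one in the statement. You correctly identify this as the crux but leave it unresolved.

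With these two checks supplied, your argument is the paper's.
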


The following proof goes by applying~\cite[Corollary 2]{Gir18} where, following the notation of the author, we choose $\alpha=\beta$.

\begin{proof}
Our definitions of the random field $\{g\circ S^j\}_{j\in\Z^d}$, the Følner sequence $\{\Gamma_n\}_{n\ge1}$, and the variance~$\sigma_g^2$ are consistent with the setting of~\cite[Corollary 2]{Gir18}. To apply that corollary, it suffices to verify the convergence of the two series in~\cite[Equation~(16)]{Gir18}. By~\cite[Lemma 1]{Gir18}, this condition is guaranteed by the convergence of the series $C_2(\beta)$ and $C_q(\beta)$, as in our definition. Since $1-1/2\le1-1/q$ and $\theta_{i,2}(g)\le\theta_{i,q}(g)$, it follows that $C_2(\beta)\le C_q(\beta)$. So, we can apply~\cite[Corollary 2]{Gir18}  because $C_q(\beta)$ is convergent by assumption. 

Finally, because under our assumptions the second term of the maximum in~\cite[Corollary 2]{Gir18} dominates the third, the exponent of $|\Gamma_n|$ in our setting reduces exactly to $\eta$.
\end{proof}

For $n\ge1$, we fix henceforth $\Gamma_n=[0,n-1]^d\cap\Z^d$.  It follows that 
\[
|\Gamma_n\cap (\Gamma_n-j)|=\begin{cases}
\prod_{\ell=1}^d(n-|j_\ell|)
&\text{if }\|j\|_\infty\le n-1,\\
0&\text{if }\|j\|_\infty\ge n.
\end{cases}
\]
If $\|j\|_\infty\le n-1$,
\begin{equation*}
\frac{|\Gamma_n\cap (\Gamma_n-j)|}{|\Gamma_n|}=\prod_{\ell=1}^d\bigg(1-\frac{|j_\ell|}{n}\bigg)\ge
\bigg(1-\frac{\|j\|_\infty}{n}\bigg)^d.
\end{equation*}
For $x\ge-1$ and $d\ge1$, Bernoulli inequality gives $1-(1-x)^d\le dx$. So, we get the estimate
\begin{equation}\label{eq:Følner}
1-\frac{|\Gamma_n\cap (\Gamma_n-j)|}{|\Gamma_n|}\le 1-
\bigg(1-\frac{\|j\|_\infty}{n}\bigg)^d\le \frac{d\|j\|_\infty}{n}.
\end{equation}

\begin{prop}\label{prop:correlation_decay}
Let $v\in C(X)$ with $\E_\mu[v]=0$, and define $f=v\circ\pi$. 
Let $p\ge d$ and 
$s>(d+1)(2+d/(p+1))$. 
If $v$ is $p$-regular and $\E[R^s]<\infty$, then there is $C>0$ such that 
\[
\sum_{j\in\Z^d}\big|\E[f(f\circ S^j)]\big|
\bigg|\frac{|\Gamma_n\cap (\Gamma_n-j)|}{|\Gamma_n|}-1\bigg|\le \frac{C}{n},
\]
for all $n\ge1$.
\end{prop}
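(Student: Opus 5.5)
Split the sum according to whether $\|j\|_\infty\le n-1$ or $\|j\|_\infty\ge n$. When $\|j\|_\infty\le n-1$, the Følner factor is controlled by \eqref{eq:Følner}:
\[
\bigg|\frac{|\Gamma_n\cap(\Gamma_n-j)|}{|\Gamma_n|}-1\bigg|\le \frac{d\|j\|_\infty}{n}.
\]
When $\|j\|_\infty\ge n$, the intersection is empty and the factor equals $1$. The whole sum is therefore bounded by
\[
\frac{d}{n}\sum_{j\in\Z^d}\|j\|_\infty\big|\E[f(f\circ S^j)]\big| + \sum_{\|j\|_\infty\ge n}\big|\E[f(f\circ S^j)]\big|,
\]
and it suffices to show that the first moment series is finite and that the tail is $O(1/n)$.

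Both follow from the single summability condition $\sum_{k\ge1}k^d\xi_k^{(2)}(v)<\infty$. To get it, apply Proposition~\ref{prop:regularity_of_rho} with $r=d$ and $q=2$. Its hypotheses are $p\ge d$ and $s>(d+1)(2+d/(p+1))$, which are exactly what is assumed here.

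For the tail, Proposition~\ref{prop:decay_covariances_v} applied under this condition gives directly
\[
\sum_{\|j\|_\infty\ge n}\big|\E[f(f\circ S^j)]\big|\le C/n.
\]

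For the first moment, use \eqref{eq:_first_moment_covariance} with $g=f$:
\[
\sum_j\|j\|_\infty\big|\E[f(f\circ S^j)]\big|\le 4\Delta_2\sum_j\|j\|_\infty\theta_{j,2}(f).
\]
Here $\Delta_2<\infty$ by Proposition~\ref{prop:rho_dep_class_q}, since $k^{d-1}\le k^d$. The $j=\mathbf 0$ term of the right-hand sum vanishes. For the remaining terms, bound $\theta_{j,2}(f)\le C^{1/2}\xi^{(2)}_{\|j\|_\infty}(v)$ by \eqref{eq:bound_phys_dep_with_xi}. Then, as in Remark~\ref{rem:sum_volume},
\[
\sum_{j\ne\mathbf 0}\|j\|_\infty\,\xi^{(2)}_{\|j\|_\infty}(v)\le C_d\sum_{k\ge1}k^{d}\xi_k^{(2)}(v)<\infty.
\]

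Adding the two bounds gives the claim $\le C/n$. There is no real obstacle: the main point is to check that the parameter condition matches Proposition~\ref{prop:regularity_of_rho} with $r=d$, so that one extra power of $k$ is available, and that \eqref{eq:_first_moment_covariance} reduces the weighted covariance sum to that extra moment.
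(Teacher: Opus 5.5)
Your proposal is correct and follows the paper's proof essentially step by step. The paper uses the same split into $\|j\|_\infty\le n-1$ and $\|j\|_\infty\ge n$, the same application of Proposition~\ref{prop:regularity_of_rho} with $r=d$ and $q=2$, Proposition~\ref{prop:decay_covariances_v} for the tail, and \eqref{eq:_first_moment_covariance} together with \eqref{eq:bound_phys_dep_with_xi} and the volume count of Remark~\ref{rem:sum_volume} for the first-moment term.
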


\begin{proof}
For any $n\ge1$, we write
\[
\sum_{j\in\Z^d}\big|\E[f(f\circ S^j)]\big|
\bigg|\frac{|\Gamma_n\cap (\Gamma_n-j)|}{|\Gamma_n|}-1\bigg| = I_1 +I_2,
\]
where
\[
I_1=\sum_{\|j\|_\infty\le n-1}
\big|\E[f(f\circ S^j)]\big|
\bigg(1-\frac{|\Gamma_n\cap (\Gamma_n-j)|}{|\Gamma_n|}\bigg)
\qquad\text{and}\qquad
I_2= \sum_{\|j\|_\infty\ge n}\big|\E[f(f\circ S^j)]\big|.
\]
Proposition~\ref{prop:regularity_of_rho} with $r=d$ and $q=2$ yields that $\sum_{k=1}^\infty k^d\xi_k^{(2)}(v)<\infty$, where $\{\xi_k^{(2)}(v)\}_{k\ge1}$ is defined in~\eqref{eq:rho_k}. By applying Proposition~\ref{prop:decay_covariances_v}, it follows that $I_2\lesssim n^{-1}$.

By~\eqref{eq:Følner},
\[
I_1\le  \frac{d}{n}\sum_{j\in\Z^d}
\|j\|_\infty \big|\E[f(f\circ S^j)]\big|.
\]
Using the bounds in~\eqref{eq:_first_moment_covariance} and~\eqref{eq:bound_phys_dep_with_xi}, there exists $C>0$ such that  
\[
\sum_{j\in\Z^d}
\|j\|_\infty\, \big|\E[f(f\circ S^j)]\big|
\le  4\Delta_2\sum_{j\in\Z^d}
\|j\|_\infty\, \theta_{j,2}(f)\le 
C \sum_{j\in\Z^d\setminus{\{\bf 0\}}}\|j\|_\infty\, \xi_{\|j\|_\infty}^{(2)}(v)
.
\]
  Reasoning similarly as in Remark~\ref{rem:sum_volume}, there is $C_d>0$ such that
\[
\sum_{j\in\Z^d\setminus{\{\bf 0\}}}\|j\|_\infty\, \xi_{\|j\|_\infty}^{(2)}(v)=
\sum_{k=1}^\infty k|B_k\setminus B_{k-1}|\, \xi_{k}^{(2)}(v) \le C_d\sum_{k=1}^\infty k^d\xi_{k}^{(2)}(v)<\infty.
\]
It follows that $I_1\lesssim n^{-1}$, thus finishing the proof.
\end{proof}

\begin{proof}[Proof of Theorem~\ref{thm:rates_CLT}]
Since $\pi$ is a factor map, it suffices to prove the rate of convergence in the CLT for $f=v\circ\pi$.
Fix $\epsilon\in(0,d/2)$ and $q \ge 3$ as in the statement, and 
let $\gamma=\epsilon/d^2$ and $\beta=(d^2/\epsilon-2d)(q+1)/q$. For $q'=\min\{q,3\}=3$, our choice of $\gamma$ and $\beta$ satisfies
\[
\frac{\gamma(q'-1)d-q'}{2}+1=-\frac{\gamma\beta q}{2(q+1)}.
\]
In addition, $f$ is bounded on $Y$ and hence it belongs to~$L^q$. Using the notation of Lemma~\ref{lem:B-E_Gir18}, we get $\eta=-1/2+\epsilon/d$, and so  $|\Gamma_n|^\eta=(n^d)^\eta=n^{-d/2+\epsilon}$. Assuming that $C_q(\beta)$ is convergent, the proof is finished by applying Lemma~\ref{lem:B-E_Gir18} and Proposition~\ref{prop:correlation_decay}.

We are left to show that $C_q(\beta)$ is convergent.
By \eqref{eq:bound_phys_dep_with_xi}, there is $D>0$ for which
\[\textstyle
\sum_{\|i\|_\infty=k}\theta_{i,q}^2(f)\le D\sum_{\|i\|_\infty=k} \big(\xi_{\|i\|_\infty}^{(q)}(v)\big)^2
\]
for every $k\ge1$. Since $|\{i\in\Z^d:\|i\|_\infty=k\}|\lesssim k^{d-1}$, 
\[\textstyle
\sum_{\|i\|_\infty=k}\theta_{i,q}^2(f)\lesssim k^{d-1}\big(\xi_{k}^{(q)}(v)\big)^2.
\]
Therefore, there exists $D_1>0$ such that 
\begin{align*}
C_q(\beta)&\le \theta_{{\bf0},q}(f)+
D_1\sum_{k=1}^\infty k^{(1-1/q)d+\beta}k^{(d-1)/2}
\xi_{k}^{(q)}(v).
\end{align*}
Note that $(1-1/q)d+\beta+(d-1)/2=h_\epsilon(q)$, where $h_\epsilon(q)$ is defined in~\eqref{eq:h_eps(q)}.
By $\theta_{{\bf0},q}(f)\le 2\|v\|_\infty$,
\[
C_q(\beta)\le 2\|v\|_\infty+D_1\sum_{k=1}^\infty k^{h_\epsilon(q)}
\xi_{k}^{(q)}(v).
\]
By the conditions~\eqref{eq:conditions_p_and_s} on $p$ and $s$, the convergence of $C_q(\beta)$ 
follows by applying Proposition~\ref{prop:regularity_of_rho} with $r=h_\epsilon(q)$, finishing the proof.
\end{proof}

\bibliographystyle{abbrv}
\bibliography{References}
\end{document}